\documentclass[reqno]{amsart}
\usepackage{graphicx}
\usepackage{amsmath}
\usepackage{mathrsfs}
\usepackage{amssymb}
\usepackage[all]{xy}
\usepackage{color}
\usepackage[colorlinks]{hyperref}

\newtheorem{thm}{Theorem}[section]
\newtheorem{cor}[thm]{Corollary}
\newtheorem{lem}[thm]{Lemma}
\newtheorem{prop}[thm]{Proposition}
\theoremstyle{definition}

\theoremstyle{remark}
\newtheorem{rem}[thm]{Remark}
\numberwithin{equation}{section}

\newcommand{\z}{\mathbb Z}
\newcommand{\q}{\mathbb Q}

\newcommand{\rp}{\mathbb R \mathrm P}

\newcommand{\omsp}{\Omega^{\mathrm{spin}}}

\begin{document}

\title{Free cyclic group actions on highly-connected $2n$-manifolds}
\author{Yang Su}

\address{School of Mathematics and Systems Science, Chinese Academy of Sciences, Beijing 100190,China}
\address{School of Mathematical Sciences, University of Chinese Academy of Sciences, Beijing 100049, China.}
\email{suyang@math.ac.cn}

\author{Jianqiang Yang}

\address{Department of Mathematics, Honghe University, Yunnan 661199, China.}

\email{yangjq\_math@sina.com}




\begin{abstract}
In this paper we study smooth orientation-preserving free actions of the cyclic group $\z/m$ on a class of $(n-1)$-connected $2n$-manifolds, $\sharp g (S^n \times S^n)\sharp \Sigma$, where $\Sigma$ is a homotopy $2n$-sphere. When $n=2$ we obtain a classification up to topological conjugation. When $n=3$ we obtain a classification up to smooth conjugation. When $n \ge 4$ we obtain a classification up to smooth conjugation when the prime factors of $m$ are larger than a constant $C(n)$.
\end{abstract}

\maketitle

\section{Introduction to the main results}
Highly-connected manifolds receive lasting attention from topologists. It was the attempt to understand the topology of $(n-1)$-connected $2n$-manifolds that led to the discovery of the exotic $7$-spheres (\cite{Mil99}). Topological problems of these manifolds are good test cases of theories and techniques of algebraic and differential topology. Wall obtained a classification of $(n-1)$-connected $2n$-manifolds up to almost diffeomorphism (\cite{Wall}). Later Kreck determined the mapping class groups of almost parallelizable $(n-1)$-connected $2n$-manifolds up to extension problems (\cite{Kreck78}). Recent progresses include the study of the extension problems (\cite{Kry02}, \cite{Cr07}, \cite{Kra19}) and the homological stablity of $B\mathrm{Diff}(\sharp g(S^n \times S^n), D)$ (\cite{GaRa}). Other related results are for example the computation of the set of mapping degrees between $(n-1)$-connected $2n$-manifolds (\cite{DW}, \cite{DP}) and the existence of almost complex structures  (\cite{Yang}).

The study of the symmetries of highly-connected manifolds is also an active research field. There is a large volume of research articles discussing the existence of free actions of various finite groups on highly-connected manifolds, see \cite{CrHa} and the bibliography therein for more information. In this paper we focus on free actions of finite cyclic groups $\z/m$ on $\sharp g(S^{n} \times S^{n}) \sharp \Sigma$, where $\Sigma$ is a homotopy $2n$-sphere. When $n$ is odd, $n \not \equiv 1 \pmod 8$  and not equal to $15$, $31$ and possibly $63$, by the classification of  $(n-1)$-connected $2n$-manifolds, and the solution of the Kervaire invariant $1$ problem (Browder, Barratt, Jones, Mahowald, Hill-Hopkins-Ravenel), every  $(n-1)$-connected $2n$-manifold is diffeomorphic to $\sharp g(S^{n} \times S^{n}) \sharp \Sigma$. Our goal is to obtain results on existence, classification and geometric description of these actions.

\medskip

\noindent {\bf Convention.} In this paper all actions are smooth, orientation-preserving and free.

\medskip

We first discuss the existence of free $\z/m$-action on $\sharp g(S^{n} \times S^{n}) \sharp \Sigma$.

\begin{thm}\label{thm:existence}
If there is an orientation-preserving free $\z/m$-action on $\sharp g (S^{n} \times S^{n}) \sharp \Sigma$, then $g+(-1)^n$ is divisible by $m$. For $\sharp g(S^n \times S^n)$ this is also a sufficient condition for the existence of a free $\z/m$-action.
\end{thm}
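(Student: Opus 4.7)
\emph{Necessity.} Let $T$ generate the $\z/m$-action on $M=\sharp g(S^n\times S^n)\sharp\Sigma$. Since each $T^k$ ($1\le k\le m-1$) acts freely, the Lefschetz fixed-point theorem yields $L(T^k)=0$. The rational cohomology of $M$ is concentrated in degrees $0,n,2n$, with $H^n(M;\q)\cong\q^{2g}$, and the orientation-preserving hypothesis makes $T^*$ the identity on $H^0$ and $H^{2n}$. Hence
\[
\mathrm{tr}\bigl(T^{k*}\mid H^n(M;\q)\bigr)=2(-1)^{n+1}\qquad(1\le k\le m-1).
\]
Averaging the character over $\z/m$ gives
\[
\dim H^n(M;\q)^{\z/m}=2(-1)^{n+1}+\frac{2(g+(-1)^n)}{m},
\]
which must be a non-negative integer, so $m\mid 2(g+(-1)^n)$.

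\emph{Sharpening by the parity of $n$.} To remove the extra factor of $2$, I use the unimodular intersection form on $H^n(M;\q)$, which is $T^*$-invariant. For $n$ odd this form is nondegenerate skew-symmetric; decomposing $H^n(M;\mathbb{C})$ into $\z/m$-isotypes $V_j^{a_j}$, the form pairs $V_j$ with $V_{-j}$ and restricts nondegenerately to the trivial isotype $V_0^{a_0}=H^n(M;\mathbb{C})^{\z/m}$. A nondegenerate skew form requires even dimension, so $a_0$ is even, and combined with the formula above this forces $m\mid g-1=g+(-1)^n$. For $n$ even I invoke the Atiyah--Singer $G$-signature theorem: freeness implies $\sigma(T^k,M)=0$ for $k\ne 0$, which together with $\sigma(M)=0$ and the Lefschetz computation above yields $\mathrm{tr}\bigl(T^{k*}\mid H^{n,\pm}(M;\mathbb{R})\bigr)=-1$ on each $g$-dimensional $\pm$-eigenspace of the intersection form. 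Averaging now gives
\[
\dim H^{n,+}(M;\mathbb{R})^{\z/m}=\frac{g+1}{m}-1\in\z_{\ge 0},
\]
forcing $m\mid g+1=g+(-1)^n$.

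\emph{Sufficiency.} Assume $m\mid g+(-1)^n$, equivalently $g\equiv(-1)^{n+1}\pmod m$. I construct a ``seed'' free action at the minimal admissible $g$ and then inflate by iterated equivariant connected sum. For $n$ odd the seed is the diagonal action of $\z/m\subset S^1$ on $S^n\times S^n\subset\mathbb{C}^{(n+1)/2}\times\mathbb{C}^{(n+1)/2}$, realizing $g=1$. For $n$ even the seed realizes $g=m-1$ and is constructed by equivariant handle theory: starting from $L\times I$, where $L=S^{2n-1}/(\z/m)$ is the standard lens space, attach $\z/m$-equivariant $n$- and $2n$-handles so that the resulting closed $2n$-manifold $N$ has $\pi_1(N)=\z/m$ and universal cover diffeomorphic to $\sharp(m-1)(S^n\times S^n)$. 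Given any free action on $\sharp g(S^n\times S^n)$, one picks a free $\z/m$-orbit of $m$ points and performs equivariant connected sum at each orbit point with a copy of $S^n\times S^n$; this yields a free action on $\sharp(g+m)(S^n\times S^n)$, and iteration starting from the seed reaches every admissible $g$. The main technical difficulty is the seed for $n$ even: one must verify that the handles can be chosen so the universal cover has the \emph{standard} smooth structure of $\sharp(m-1)(S^n\times S^n)$, which is checked by handle calculus and by computing the intersection form on the cover.
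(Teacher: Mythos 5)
Your necessity argument is correct, though it is considerably more elaborate than the paper's. The paper simply notes that the quotient $M'$ has Euler characteristic $\chi(M')=2(1+(-1)^n g)/m$, and that $\chi(M')$ must be even because $\dim_\q H_n(M';\q)$ is even (for $n$ odd by skew-symmetry of the nondegenerate intersection form, for $n$ even by vanishing of the signature), whence $m\mid g+(-1)^n$. Your version reaches the same conclusion via the Lefschetz fixed point theorem, isotypical decomposition of $H^n(\widetilde M;\mathbb C)$, and for $n$ even the Atiyah--Singer $G$-signature theorem. Since $\dim H^n(\widetilde M;\q)^{\z/m}=\dim H^n(M';\q)$, your averaging computation is the paper's Euler characteristic computation in disguise, and the evenness/positivity constraints you impose are the same constraints the paper imposes directly on $M'$. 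Both are valid; the paper's route is lighter. Your $n$-odd sufficiency via the diagonal $\z/m\subset S^1$ action on $S^n\times S^n$ is also fine (the paper instead quotients by the action on the first factor only, giving $L^n_m\times S^n$), and iterated equivariant connected sum matches the paper's inflation step.

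The genuine gap is the seed construction for $n$ even. You write that one should start from $L\times I$ with $L=S^{2n-1}/(\z/m)$ and ``attach $\z/m$-equivariant $n$- and $2n$-handles'' to obtain a closed $2n$-manifold $N$ with universal cover $\sharp(m-1)(S^n\times S^n)$, and you acknowledge that the ``main technical difficulty'' is verifying the cover is standard. This is not a proof: no attaching maps or framings are given, the notion of ``$\z/m$-equivariant handle'' attached to the quotient $L\times I$ (on which $\z/m$ acts trivially) is unclear, and it is not specified how both boundary components are to be capped off. More seriously, the proposed verification --- ``by computing the intersection form on the cover'' --- is insufficient even in principle: by Wall's classification, the diffeomorphism type of an $(n-1)$-connected $2n$-manifold is not determined by its intersection form alone; one also needs the normal bundle data $H_n\to\pi_{n-1}(SO(n))$ and control of the homotopy-sphere summand. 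The paper avoids all of this with a concrete construction: start from $L^{n+1}_m\times S^{n-1}$ (whose universal cover is $S^{n+1}\times S^{n-1}$), and do surgery on an equivariant family $\coprod_{i=1}^{m} D^{n+1}\times S^{n-1}\hookrightarrow S^{n+1}\times S^{n-1}$; one surgery produces $S^{2n}$, and the remaining $m-1$ produce $\sharp_{m-1}(S^n\times S^n)$ by a standard elementary computation. You should replace your handle-theoretic sketch with an explicit construction of this kind, or else supply the Wall-invariant bookkeeping that your sketch omits.
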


Next we consider the classification of free $\z/m$-actions on $\sharp g(S^{n} \times S^{n}) \sharp \Sigma$.  When $n=2$, a classification of free $\z/m$-actions on $\sharp g (S^{2} \times S^{2})$ up to topological conjugations can be obtained from the topological classification of $4$-manifolds with finite cyclic fundamental group due to Hambleton-Kreck \cite{HK93}.

\begin{thm}\label{thm:dim2}
Assume that $g+1$ is divisible by $m$. Then up to topological conjugation there is a unique orientation-preserving smooth free $\z/m$-action on $\sharp g(S^2 \times S^2)$.
\end{thm}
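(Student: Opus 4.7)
The plan is to reduce the classification of free $\z/m$-actions on $M = \sharp g(S^2\times S^2)$ up to topological conjugation to the classification of the orbit spaces $N = M/(\z/m)$ as closed oriented topological $4$-manifolds, and then to invoke the Hambleton--Kreck classification \cite{HK93} of $4$-manifolds with finite cyclic fundamental group.

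First, I would argue by covering space theory that two free orientation-preserving actions on $M$ are topologically conjugate if and only if their orbit spaces are orientation-preservingly homeomorphic via a map compatible with the identification $\pi_1 \cong \z/m$. Any such orbit space $N$ is a closed smooth oriented $4$-manifold with $\pi_1(N) \cong \z/m$, Euler characteristic $\chi(N) = 2(g+1)/m$ (integer by hypothesis), signature $\sigma(N) = 0$, trivial Kirby--Siebenmann invariant, and universal cover diffeomorphic to $M$.

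Next, the Hambleton--Kreck theorem asserts that two such $4$-manifolds are homeomorphic precisely when they have matching $\pi_1$, $w_2$-type, Kirby--Siebenmann invariant, and equivariant intersection form on $\pi_2$. The first and third invariants are fixed. Since $\wtd N = M$ is spin, the $w_2$-type is either II (when $N$ is spin) or III (when $N$ is not but $\wtd N$ is). For $m$ odd, $H^2(\z/m;\z/2) = 0$ forces $w_2(N) = 0$, giving type II; for $m$ even I would verify that the realizable type is uniquely determined by $g$ and $m$, using that any free orientation-preserving action on the spin manifold $M$ descends the spin structure in a controlled way.

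The main obstacle is identifying the equivariant intersection form $\lambda_N : \pi_2(N) \times \pi_2(N) \to \z[\z/m]$, a hermitian form over $\z[\z/m]$ whose underlying $\z$-form is the standard hyperbolic form of rank $2g$ on $H_2(M)$. My approach would be to exhibit an explicit standard free $\z/m$-action realising the minimal case $g+1 = m$ (for instance via equivariant plumbing based on a linear $\z/m$-action on $S^3$, or as the boundary of an equivariant handlebody over a $3$-dimensional lens space), and obtain larger $g$ via equivariant connected sums with this model. Showing that every free action yields an isometric equivariant form then reduces to a cancellation or stable-hyperbolicity statement for hermitian forms over $\z[\z/m]$. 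Once the form is shown to be unique up to isometry, Hambleton--Kreck provides an orientation-preserving homeomorphism of orbit spaces compatible with the $\pi_1$-identification, which lifts to a topological conjugation of the given actions.
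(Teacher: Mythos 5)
You have the right high-level structure (pass to orbit spaces, cite Hambleton--Kreck), but you misquote the classification theorem in a way that derails the rest of the argument. Theorem C of \cite{HK93}, as the paper also records it, classifies closed oriented $4$-manifolds with finite cyclic fundamental group by $\pi_1$, the \emph{ordinary integral} intersection form on $H_2(M;\z)/\mathrm{Tors}$, the $w_2$-type, and the Kirby--Siebenmann invariant. It does \emph{not} ask you to match the equivariant intersection form on $\pi_2$ as a hermitian form over $\z[\z/m]$; the whole content of Hambleton--Kreck's cancellation work is precisely that, for finite cyclic $\pi_1$, the coarser integral form already suffices. Your plan therefore commits you to proving a uniqueness/cancellation statement for hermitian forms over $\z[\z/m]$ (via explicit equivariant models and stable hyperbolicity) which is both unnecessary and considerably harder than the problem you started with -- essentially you would be reproving a large part of \cite{HK93}. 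Once you use the theorem correctly, the intersection form step becomes trivial: the form is indefinite of signature $0$, and once one knows $w_2(N)=0$ it is even, hence hyperbolic and determined by its rank $2(g+1)/m - 2$.

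The second gap is the $w_2$-type. For $m$ odd your argument is fine, but for $m$ even you say only that you ``would verify'' the realizable type is determined by $g$ and $m$, without an argument and without even asserting which type occurs. The paper settles this cleanly: since $\widetilde N$ is spin, $w_2(N)$ vanishes on integral classes, and the Wu formula gives $w_2(N)^2 = w_4(N) \equiv \chi(N) \equiv 0 \pmod 2$; combined these force $w_2(N)=0$, so the $w_2$-type is always type II. You need some such argument, because a priori $H^2(\z/m;\z/2)\cong\z/2$ could contribute a nonzero $w_2$ when $m$ is even. So: replace the equivariant-form step with the integral-form computation, and supply the Wu-formula argument for $w_2=0$; then the proof closes exactly as in the paper.
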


 For $n=3$ we have a classification of free $\z/m$-actions on $\sharp g(S^{3}\times S^{3})$ up to  smooth conjugations. Let  $L_m^3=L^{3}(m; 1,1)$ be the standard $3$-dimensional lens space with fundamental group $\z/m$. When $m$ is even, there is a unique orientable $4$-dimensional real vector bundle $\xi$ over $L_{m}^{3}$ with $w_{2}(\xi) \ne 0$. Let $S(\xi)$ be the sphere bundle of $\xi$, its universal cover is $S^3 \times S^3$, since any vector bundle over $S^3$ is trivial.

\begin{thm}\label{thm:dim3}
Assume that $g-1$ is divisible by $m$. When $m$ is odd, up to smooth  conjugation, there is a unique free $\z/m$-action on $\sharp g(S^3 \times S^3)$, with quotient manifold diffeomorphic to \mbox{$(L_m^3 \times S^3 )\sharp \frac{g-1}{m}(S^3 \times S^3)$}. When $m$ is even, there are two conjugacy classes of free $\z/m$-actions on $\sharp g(S^3 \times S^3)$, with quotient manifolds diffeomorphic to $$(L_m^3 \times S^3 )\sharp \frac{g-1}{m}(S^3 \times S^3)\ \ \  \mathrm{or \ \ \ } S(\xi) \sharp \frac{g-1}{m}(S^3 \times S^3).$$
These two manifolds are distinguished by $w_{2}$.
\end{thm}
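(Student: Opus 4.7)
\medskip

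\noindent\emph{Proof proposal.}
The plan is to reduce to classifying the quotient manifold. Since free orientation-preserving $\z/m$-actions on $M := \sharp g(S^3\times S^3)$ correspond, up to smooth conjugation, to orientation-preserving diffeomorphism classes of closed oriented $6$-manifolds $N$ with $\pi_1(N) \cong \z/m$, $\pi_2(N)=0$ (inherited from $\pi_2(M)=0$), and $\wtd{N} \cong M$, it suffices to classify such quotients. I would first verify the proposed models and distinguish them. Any orientable rank-$4$ bundle over $S^3$ is trivial (since $[S^3, B\mathrm{SO}(4)] = \pi_2(\mathrm{SO}(4)) = 0$), so the universal cover of $S(\xi)$ is $S^3 \times S^3$. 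Combined with the elementary fact that the universal cover of $A \sharp B$ with $B$ simply-connected and $|\pi_1(A)|=m$ is $\wtd{A}$ connect-summed with $m$ copies of $B$, this gives $\wtd{N_1} \cong \wtd{N_2} \cong \sharp g(S^3 \times S^3)$ for $N_1 = (L_m^3 \times S^3) \sharp k(S^3 \times S^3)$ and $N_2 = S(\xi) \sharp k(S^3 \times S^3)$ with $k = (g-1)/m$. For even $m$, $w_2(N_1)=0$ (both $L_m^3$ and $S^3$ are spin), whereas $w_2(N_2)$ is the nonzero pullback of $w_2(\xi)$, so $N_1 \not\cong N_2$. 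For odd $m$, $H^2(B\z/m; \z/2)=0$, and since $\pi_2(N)=0$ the classifying map $N \to B\z/m$ is a $3$-equivalence, forcing $w_2(N)=0$ automatically and leaving only the spin model.

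The main step is to show that every such $N$ is diffeomorphic to $N_1$ or $N_2$ (according to the value of $w_2(N)$) via Kreck's modified surgery theory. The relevant normal $2$-type is $B = B\mathrm{Spin} \times B\z/m$ in the spin case, and, in the non-spin case, the homotopy pullback $B\mathrm{SO} \times_{K(\z/2, 2)} B\z/m$ formed from the universal $w_2$ and the generator $w \in H^2(B\z/m; \z/2)$. I would compute $\Omega_6^B$ via the Atiyah--Hirzebruch spectral sequence. For $m$ odd this bordism group vanishes (since $H_*(B\z/m;\z/2)=0$ and $H_{\mathrm{even}>0}(B\z/m;\z)=0$ kill all columns), so $N$ and $N_1$ are automatically $B$-bordant. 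For $m$ even, the computation leaves only a small number of secondary invariants (of eta/Arf type attached to the $\pi_1$-structure); these match for $N$ and the corresponding $N_i$ because their universal covers carry the standard hyperbolic $\z[\z/m]$-form on $H_3$. Given a $B$-bordism $W$ from $N$ to $N_i$, surgeries below the middle dimension convert $W$ into a $B$-bordism that is $2$-connected relative to $\partial W$; the obstruction to performing the remaining middle-dimensional surgeries to obtain an $s$-cobordism lies in Kreck's surgery monoid $l_7(\z[\z/m])$, and should vanish because the equivariant intersection form on $H_3(\wtd{N}; \z) \cong \z[\z/m]^{2k}$ is the standard hyperbolic form, providing the algebraic flexibility needed to realise the required $3$-surgeries geometrically. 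The $s$-cobordism theorem then yields $N \cong N_i$.

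The main obstacle I expect is this final step: controlling the middle-dimensional surgery obstruction in $l_7(\z[\z/m])$ in the absence of a $\pi$--$\pi$ situation. The vanishing should ultimately rest on the hyperbolic structure of the equivariant intersection form on $H_3(\wtd{N};\z)$ together with specific properties of cyclic group rings in low dimension, both of which are essential for the cyclic case to be completely tractable in dimension $6$.
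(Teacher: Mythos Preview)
Your overall strategy matches the paper's---Kreck's modified surgery applied to the quotient manifold---and your setup of the normal $2$-type, the model manifolds, and the $w_2$-distinction is correct. But there is a genuine error in the surgery-obstruction step, and a secondary gap in the bordism computation for $m$ even.

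The central mistake is the assertion that the equivariant intersection form on $H_3(\wtd N;\z)$ is the standard hyperbolic form on $\Lambda^{2k}$, $\Lambda = \z[\z/m]$. This fails already as abelian groups: $H_3(\sharp g(S^3\times S^3)) \cong \z^{2g}$ with $g = 1 + km$, whereas $\Lambda^{2k}$ has underlying group $\z^{2km}$. The correct $\Lambda$-form, read off from the model $M_0 = (L_m^3 \times S^3)\sharp k(S^3\times S^3)$, is $\z^2 \perp H_-^k(\Lambda)$, where $\z^2$ is the \emph{trivial} $\Lambda$-module with $\lambda(u_1,u_2) = s = 1 + g + \cdots + g^{m-1}$ (the norm element). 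This non-free, non-hyperbolic summand is exactly the obstacle; it is what prevents the obstruction in $l_7^\sim(\z/m,+)$ from being visibly elementary, and its treatment is the technical heart of the proof. The paper shows (Lemma~\ref{lem:s}, using the ideal-theoretic Lemma~\ref{lem:module}) that any half-rank isometric embedding of this $(\Lambda^2,\lambda,\wtd\mu)$ into $H_-^2(\Lambda)$ admits a Lagrangian complement; this reduces the obstruction to $L_7^\sim(\z/m,+)$, which is then handled via the surjection $L^s_7(\z/m) \to L^{s\sim}_7(\z/m)$ (Lemma~\ref{lem:kr}), the known values of $L^s_7(\z/m)$, and for $m$ even a codimension-$1$ Arf realisation to kill the residual $\z/2$. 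None of this follows from ``the form is hyperbolic''.

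Secondarily, for $m$ even you leave $\Omega_6(B,p)$ undetermined, appealing to unspecified ``eta/Arf-type'' invariants that you then claim to match. In fact $\Omega_6(B,p) = 0$ in all cases; for $m$ even this requires identifying nontrivial $d_2$ differentials (dual to $\mathrm{Sq}^2$, respectively $\mathrm{Sq}^2 + w_2(\gamma)\cup -$) and a nontrivial $d_3$ in the Atiyah--Hirzebruch spectral sequence, the latter pinned down by comparison with $\rp^\infty$.
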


For $n \ge 4$, we have a classification of  fee $\z/m$-actions on $\sharp g(S^{n} \times S^{n}) \sharp \Sigma$ when the prime factors of $m$ are relatively large. Before stating the result we set up the notations.

 Let $M$ be the quotient space of a free $\z/m$-action on $\sharp g(S^n \times S^n) \sharp \Sigma$, there  is  a canonical identification $\pi_1(M)=\z/m$. From the Leray-Serre spectral sequence of the fibration $\sharp g (S^n \times S^n)\sharp \Sigma \to M \to K(\z/m,1)$ we have identifications $H^{4i}(M)=H^{4i}(K(\z/m,1))=\z/m$ for $i=1, \cdots, [n/4]$ when $n\ne 4k$. When $n=4k$, from the exact sequence
$$0 \to H^n(K(\z/m,1)) \to H^n(M) \to H^n(\sharp_g(S^n \times S^n)\sharp \Sigma )$$
we see that $p_k(M)$ is in the image of $H^n(K(\z/m,1))$. Therefore we may regard the Pontrjagin classes $p_1(M), \cdots p_{[n/4]}(M)$ as in $\z/m$.

For a finite abelian group $A$ of order $|A|$, let $\mathrm{exp}(A)$ denote the set of prime factors of  $|A|$. Let $J_q \colon \pi_q(O) \to \pi_q^S$ be the stable $J$-homomorphism. Let
$$C(n) = \mathrm{max} \{ p \in \bigcup_{q=1}^{n}\mathrm{exp}(\pi_{q}^{S}) \cup \bigcup_{q=n+1}^{2n}\mathrm{exp}(\mathrm{Coker}\,J_{q}) \ | \ p \ge 3, p \ge 2[n/4]-1 \}$$
Let $\Theta_{2n}$ be the group of homotopy spheres of dimension $2n$. By the classical results of Kervaire-Milnor \cite{KerMil}, for $n$  even, $\Theta_{2n}$ is isomorphic to  $\mathrm{Coker}\,J_{2n}$; for $n$ odd, $\Theta_{2n}$ is isomorphic to a subgroup of $\mathrm{Coker}\,J_{2n}$ of index at most $2$. Therefore when the prime factors of $m$ are all larger than $C(n)$, $m$ is invertible in $\Theta_{2n}$.

\begin{thm}\label{thm:gen}
Assume that $g+(-1)^{n}$ is divisible by $m$. If the prime factors of $m$ are all larger than $C(n)$,  then taking the Pontrjagin classes $(p_{1}, \cdots, p_{[n/4]})$ of the quotient manifold gives rise to a one-to-one correspondence between  the set of conjugacy classes of orientation-preserving free $\z/m$-actions on \mbox{$\sharp g (S^{n} \times S^{n}) \sharp \Sigma$} and $(\z/m)^{[n/4]}$.

When $n$ is odd, the quotient manifolds have the form
$$S(\xi) \sharp \frac{g-1}{m} (S^{n} \times S^{n})\sharp \frac{1}{m}\Sigma,$$
where $S(\xi)$ is the sphere bundle of a certain $(n+1)$-dimensional vector bundle $\xi$ over the standard lens space $L^{n}(m;1, \cdots,1)$. When $n$ is even, the quotient manifolds have the form
$$N(\xi) \sharp \frac{g+1}{m}(S^n \times S^n) \sharp \frac{1}{m}\Sigma,$$ where $N(\xi)$ is the result of a surgery on a fiber of $S(\xi)$, with $S(\xi)$  the sphere bundle of a certain $n$-dimensional vector bundle $\xi$ over the standard lens space $L^{n+1}(m;1,\cdots, 1)$.
\end{thm}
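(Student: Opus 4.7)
The plan is to recast conjugacy classes of orientation-preserving free $\z/m$-actions on $X := \sharp g(S^n\times S^n)\sharp\Sigma$ as diffeomorphism classes of closed oriented $2n$-manifolds $M$ equipped with an isomorphism $\pi_1(M)\cong\z/m$ and a $\pi_1$-compatible orientation-preserving diffeomorphism $\wtd M \cong X$. The proof then splits into a realization step (every $(c_1,\ldots,c_{[n/4]})\in(\z/m)^{[n/4]}$ is the tuple of Pontrjagin classes of some quotient) and a uniqueness step (two quotients with equal Pontrjagin classes are diffeomorphic). The former I will do by explicit construction; the latter by Kreck's modified surgery.

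\medskip

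\noindent\textbf{Realization.} For odd $n$, I will take an oriented $(n+1)$-plane bundle $\xi$ over the standard lens space $L := L^n(m;1,\ldots,1)$. The Gysin sequence of $S(\xi)\to L$, combined with the fact that $L$ is stably parallelizable in the relevant range, identifies $p_j(S(\xi))$ with a unit multiple of $p_j(\xi)\in H^{4j}(L;\z)\cong\z/m$ for $j\le [n/4]$. Using the Atiyah--Hirzebruch spectral sequence for $\wtd{KO}^{0}(L)$ and the prime hypothesis on $m$ (which collapses differentials and extensions involving primes $\le C(n)$), every tuple of Pontrjagin classes is realizable by some $\xi$. I then set $M_\xi := S(\xi)\sharp\tfrac{g-1}{m}(S^n\times S^n)\sharp\tfrac{1}{m}\Sigma$, where $\tfrac{1}{m}\Sigma\in\Theta_{2n}$ is well-defined by the hypothesis on $m$. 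A handle-decomposition argument on the universal cover, using Theorem~\ref{thm:existence} to count generators, will show $\wtd M_\xi\cong X$. The even $n$ case runs in parallel, replacing $L$ by $L^{n+1}(m;1,\ldots,1)$ and performing the stated surgery on a fiber of $S(\xi)$ to adjust the genus from $g-1$ to $g+1$.

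\medskip

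\noindent\textbf{Uniqueness.} For $M$ as above, I will compute the normal $(n-1)$-type $B \to BO$ and find it of the form $K(\z/m,1)\times BO\langle k\rangle \to BO$, where $k$ is determined by the Stiefel--Whitney classes of $M$ (themselves invariants of the normal $(n-1)$-type). Two such $M, M'$ with identical Stiefel--Whitney and Pontrjagin classes admit compatible normal $(n-1)$-smoothings into a common $B$, and their difference class lies in $\Omega_{2n}^{B}$. The Atiyah--Hirzebruch spectral sequence for $\Omega_{*}^{B}$ has $E_{2}$-terms $H^{i}(B\z/m;\pi_{j}^{S})$ for $j\le n$ and $H^{i}(B\z/m;\mathrm{Coker}\,J_{j})$ for $n < j\le 2n$, and by construction of $C(n)$ each such group is annihilated by an integer coprime to $m$; thus only the Pontrjagin classes survive, and the two normal smoothings become $B$-bordant. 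On such a $B$-bordism $W$, Kreck's surgery below the middle dimension reduces the remaining obstruction to diffeomorphism to the monoid $l_{2n+1}(\z[\z/m])$; the $p$-primary parts of $L_{2n+1}(\z[\z/m])$ for $p\le C(n)$ are the only potential obstructions and they do not affect our classes by hypothesis. The Whitehead torsion of the resulting $h$-cobordism lies in $\mathrm{Wh}(\z/m)$, whose relevant torsion is similarly killed, so the $s$-cobordism theorem yields a diffeomorphism $M \cong M'$ compatible with the $\pi_{1}$-identifications.

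\medskip

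\noindent\textbf{Main obstacle.} The delicate step is the $B$-bordism and $L$-theoretic computation: one must track enough of the Atiyah--Hirzebruch differentials in $\Omega_{2n}^{B}$, of the action of the Wall monoid $l_{2n+1}(\z[\z/m])$ on normal smoothings, and of the Whitehead torsion obstruction to verify that after inverting all primes dividing $m$ (each exceeding $C(n)$) only the Pontrjagin classes remain as invariants. The realization step, while detailed, reduces to bundle-theoretic and characteristic-class calculations once the models $M_\xi$ are chosen correctly.
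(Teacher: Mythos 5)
Your overall strategy (explicit realization via bundles over lens spaces, plus uniqueness via Kreck's modified surgery through the normal $(n-1)$-type and the $l$-monoid obstruction) matches the paper's, but two substantive steps in your uniqueness argument are asserted rather than handled, and one of them would actually fail as stated.

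First, the claim that ``only the Pontrjagin classes survive, and the two normal smoothings become $B$-bordant'' is not correct. After you kill the positive-filtration terms of the Atiyah--Hirzebruch spectral sequence for $\Omega_{2n}(B,p)$ using the hypothesis on $m$, you are left with the $E^{0,2n}_2$-term, which surjects from $\Omega_{2n}^{O\langle n+1\rangle}$ and is emphatically not annihilated by an integer coprime to $m$: it carries the signature (for $n$ even), the Arf/Kervaire invariant (for $n$ odd), and a copy of $\Theta_{2n}$. So two quotient manifolds with equal Pontrjagin classes need not be $B$-bordant at this stage. The paper's proof confronts this directly: the difference $[M]-[M_0]$ is represented by an $(n-1)$-connected $2n$-manifold $Y$ with an $O\langle n+1\rangle$-structure, one argues separately that the signature or Kervaire invariant of $Y$ must vanish (the signature of $M$ and $M_0$ agree because both universal covers are $\sharp g(S^n\times S^n)\sharp\Sigma$; the Kervaire case is ruled out by a parity argument), and the residual ambiguity is a homotopy sphere $\Sigma'\in\Theta_{2n}$. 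Only then does one invoke that passing to the $m$-fold cover and using the trivial inertia group of $\sharp g(S^n\times S^n)$ forces $m\cdot\Sigma'=0$, so $\Sigma'=0$ since $m$ is invertible in $\Theta_{2n}$. Without this chain of arguments, your uniqueness step does not close.

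Second, saying the obstruction lies in the monoid $l_{2n+1}(\z[\z/m])$ and then arguing with $p$-primary parts of $L_{2n+1}$ and torsion in $\mathrm{Wh}(\z/m)$ understates the work. The surgery obstruction $\theta(W,\bar\nu)$ must be shown to be \emph{elementary} in the $l$-monoid, and elementary is strictly stronger than lying in the zero coset of an $L$-group quotient. The heart of the matter in the paper is a concrete Lagrangian-complement lemma (Lemma~\ref{lem:s} for $n\equiv 3\pmod 4$, the explicit $4\times 4$ determinant computation in \S\ref{subsec:obeven} for $n$ even), which needs an arithmetic result about ideals in $\z[\z/m]$ containing the augmentation ideal (Lemma~\ref{lem:module}). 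Neither the observation that $L_{2n+1}^s(\z/m)=0$ nor the $2$-torsion argument for Whitehead torsion by itself suffices; you still have to move the half-rank direct summand into standard position by explicit ambient isometries before any $L$-group computation applies. This is not a routine inversion-of-primes argument and is missing from your proposal.

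Finally, two small points on the realization side: your statement that the Gysin sequence identifies $p_j(S(\xi))$ ``with a unit multiple of $p_j(\xi)$'' should be $p(S(\xi))=\pi^*(p(L)\cdot p(\xi))$, so the model bundle must be chosen to cancel the Pontrjagin classes of the lens space; and for $n\equiv 0\pmod 4$ you additionally need $e(\xi)=0$ so that $\pi^*$ is an isomorphism on $H^{n}$ and the surgery can be performed, a constraint the paper has to verify via a stable-range obstruction-theory argument in $BO(n-1)$.
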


\begin{rem}
For more precise description of the quotient manifolds see Section \ref{subsec:model}.
\end{rem}

\begin{rem}
From the knowledge of the stable stems $\pi_{q}^{S}$ (c.~f.~(\cite[p.~376]{Rav}) and the $J$-homomorphisms (\cite{Adams66}) we have a table of $C(n)$ for small $n$.
$$\begin{array}{c|cccccc}
n & 4 & 5 & 6 & 7 & 8 & 9\\
\hline
C(n) & 3 & 3 & 3 & 3 & 5 &5
\end{array}$$
\end{rem}

\begin{rem}
A $\z/m$-action on $\sharp_{g}(S^{n} \times S^{n}) \sharp \Sigma $ induces a $\z[\z/m]$-module structure on the homology group  $H_{n}(\sharp  {g}(S^{n} \times S^{n}) \sharp \Sigma) = \z^{2g}$. In contrast to the abundance of integral representations of $\z/m$ (\cite{Reiner}), Theorem \ref{thm:dim3} and Theorem \ref{thm:gen} imply that, when the prime factors of $m$ are larger than $C(n)$,  the only modules which  can be realized by free actions are
$$\z^{2}\oplus \z[\z/m]^{2r} \textrm{\ \ ($n$ odd)}\ ,  \ I^{2} \oplus \z[\z/m]^{2r} \textrm{\ \ ($n$ even)}$$
where $I$ is the augmentation ideal. 
\end{rem}

\begin{cor}\label{cor:l}
Let $L$ and $L'$ be $3$-dimensional lens spaces of fundamental group $\z/m$. Then $L \times S^3$ is diffeomorphic to $L' \times S^3$.  In general, let $L$ and $L'$ be $n$-dimensional lens spaces with fundamental group $\z/m$. If $L$ and $L'$ have equal Pontrjagin classes (under certain identification of their fundamental groups), and the prime factors of $m$ are larger than $C(n)$, then $L \times S^n$ is diffeomorphic to $L' \times S^n$.
\end{cor}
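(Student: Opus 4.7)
The strategy is to realize each of $L\times S^{n}$ and $L'\times S^{n}$ as the quotient of a free $\z/m$-action on $S^{n}\times S^{n}=\sharp 1(S^{n}\times S^{n})\sharp S^{2n}$, in which $\z/m$ acts by deck transformations of $S^{n}\to L$ (resp.\ $S^{n}\to L'$) on the first factor and trivially on the second, and then to apply Theorem \ref{thm:dim3} or Theorem \ref{thm:gen} to conclude that the two actions are smoothly conjugate. Since lens spaces exist only in odd dimensions, $g+(-1)^{n}=1-1=0$ is divisible by $m$, so the divisibility hypotheses of both theorems are satisfied.

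For $n=3$, every orientable $3$-manifold is parallelizable, so $L\times S^{3}$ and $L'\times S^{3}$ are parallelizable; in particular both have vanishing $w_{2}$. Theorem \ref{thm:dim3}, with $\frac{g-1}{m}=0$, therefore forces both quotients to lie in the conjugacy class modelled by $L_{m}^{3}\times S^{3}$, regardless of the parity of $m$, and hence $L\times S^{3}\cong L_{m}^{3}\times S^{3}\cong L'\times S^{3}$.

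For odd $n\ge 5$ the plan is to verify that the Pontrjagin classes of $L\times S^{n}$ and $L'\times S^{n}$ coincide in $(\z/m)^{[n/4]}$ and then to invoke Theorem \ref{thm:gen}. Because $T(L\times S^{n})=\mathrm{pr}_{L}^{\ast}TL\oplus\mathrm{pr}_{S^{n}}^{\ast}TS^{n}$ and $S^{n}$ is stably parallelizable, one has $p_{i}(L\times S^{n})=\mathrm{pr}_{L}^{\ast}p_{i}(L)$. Moreover, the classifying map $L\times S^{n}\to K(\z/m,1)$ factors through $\mathrm{pr}_{L}$, so the identification $H^{4i}(L\times S^{n})\cong\z/m$ coming from the Leray--Serre spectral sequence in the paper is compatible via $\mathrm{pr}_{L}^{\ast}$ with the identification $H^{4i}(L)\cong\z/m$ used in the hypothesis. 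Consequently $p_{i}(L\times S^{n})=p_{i}(L)=p_{i}(L')=p_{i}(L'\times S^{n})$ in $\z/m$, and Theorem \ref{thm:gen} delivers the desired diffeomorphism.

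The only real piece of bookkeeping, and hence the main potential obstacle, is making the various identifications with $\z/m$ consistent; this reduces to the fact that the inflation map $H^{4i}(K(\z/m,1))\to H^{4i}(L)$ is an isomorphism in the range $4i\le n-1$, together with naturality of pullback along the projection $\mathrm{pr}_{L}$. With these identifications in hand, the two classification theorems immediately finish the argument.
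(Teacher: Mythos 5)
Your proposal is correct and is the natural deduction from Theorems \ref{thm:dim3} and \ref{thm:gen}: you view $L\times S^n$ as the quotient of the deck-transformation $\z/m$-action on $S^n\times S^n$ (so $g=1$, $g+(-1)^n=0$), handle $n=3$ via Theorem \ref{thm:dim3} by noting $w_2(L\times S^3)=0$ (parallelizability of orientable $3$-manifolds), and handle odd $n\ge 5$ via Theorem \ref{thm:gen} after verifying that the identification $H^{4i}(L\times S^n)\cong\z/m$ from the Leray--Serre spectral sequence is compatible with $\mathrm{pr}_L^*$, so the Pontrjagin class hypothesis transfers. The paper leaves this corollary without an explicit proof, and your argument supplies exactly the expected one.
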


\begin{rem}
By a classical result of Mazur \cite{Ma61}, if $L$ and $L'$ are stably tangential homotopy equivalent $n$-manifolds, then $L \times S^n \times \mathbb R$ is diffeomorphic to $L' \times S^n \times \mathbb R$. For spherical space forms of odd order fundamental group, Pontrjagin classes determine the stable tangent bundle. Thus Corollary \ref{cor:l} is an improvement of the classical result under the condition of the prime factors of $m$.
\end{rem}

\begin{rem}
The same method of this paper can be used to study free $\z/m$-action on almost parallalizable $(n-1)$-connected $2n$-manifolds with Kervaire-Arf invariant $1$. These manifolds exist in dimension $30$, $62$ and possibly $126$ and have the form $K^{2n} \sharp g(S^n \times S^n)$, where $K$ is a smooth Kervaire manifold. Similar to Theorem \ref{thm:existence}, a necessary condition for  the existence of a free $\z/m$-action is that $g-1$ is divisible by $m$. It's shown in \cite[Theorem 9.1]{CrHa} that there is a free $\z/m$-action on some smooth Kervaire manifold $K_0$. An arbitrary  smooth Kervaire manifold has the form $K= K_0 \sharp \Sigma$. When the prime factors of $m$ are larger than $\mathrm{exp}(\mathrm{Coker}\,J_{2n})$, by taking equivariant connected sum with $\frac{1}{m}\Sigma$ and copies of $S^{n} \times S^{n}$ one constructs a free $\z/m$-action on $K\sharp g(S^{n} \times S^{n})$.  The method of this paper applies to the classification of these actions. When the prime factors of $m$ are larger than $C(n)$, the only remaining work is the analysis of the surgery obstruction, i.~e.~an algebraic result similar to Lemma \ref{lem:s}.
\end{rem}

In the remaining part of the introduction we describe the strategy of the classification of actions used in this paper. The classification of free group actions on simply-connected manifolds up to conjugation is equivalent to the classification up to diffeomorphism of the quotient manifolds with a fixed identification of the fundamental group with the given group. We will apply the modified surgery method to attack this classification problem. This usually consists of  three steps (for more details see \cite{Kreck99}):

\begin{enumerate}
\item Find a fiberation $p \colon B \to BO$ such that the manifolds under consideration have liftings $\overline \nu$ of their normal Gauss map $\nu$, and $\overline \nu$ is a homotopy equivalence up to the middle dimension. Typically one uses the so-called normal $q$-type $B^q(M)$, which is canonically associated to a manifold $M$, and  indeed is the $q$-stage Moore-Postnikov tower of the normal Gauss map $\nu \colon M \to BO$.

\item Compute the normal bordism group $\Omega_m(B,p)$. There is an obvious action of the group of fiber-homotopy self-equivalences $Aut(p)$ on $\Omega_m(B,p)$ . A manifold $M$ corresponds to an orbit of this action.

\item When two manifolds are cobordant in $\Omega_m(B,p)$, there is a surgery obstruction  $\theta \in l_{m+1}(\pi_1, w_1)$ to transforming by surgery a cobordism to an $s$-cobordism. Study this surgery obstruction.
\end{enumerate}

\medskip

\noindent {\bf Structure of the paper.} We will prove Theorem \ref{thm:existence} and Theorem \ref{thm:dim2} in \S \ref{sec:2}. For the classification of actions, to illustrate the method,  we first study the case of $n=3$: in \S \ref{subsec:norm} we determine the normal $2$-type and  the normal bordism groups; in \S \ref{sec:ob} we analyze the surgery obstruction and prove Theorem \ref{thm:dim3}. The study of higher dimensional cases goes along the same line, but technically more complicated. This is the content of \S \ref{sec:high}. A computation of the bordism group with the Atiyah-Hirzebruch spectral sequence is in the Appendix.

\section{Proofs of Theorem \ref{thm:existence} and Theorem \ref{thm:dim2}}\label{sec:2}
The proof of  Theorem \ref{thm:existence} only uses elementary algebraic topology.

\begin{proof}
Assume that there is an orientation-preserving free $\z/m$-action on $\sharp {g}(S^{n} \times S^{n}) \sharp \Sigma$, with quotient manifold $M$. The Euler characteristic of $\sharp g (S^{n} \times S^{n}) \sharp \Sigma $ is $2(1+(-1)^{n}g)$. Therefore the Euler characteristic of $M$ is $2(1+(-1)^{n}g)/m$. When $n$ is even, the signature of $\sharp g (S^{n} \times S^{n}) \sharp \Sigma$ is $0$, therefore the signature of $M$ is also $0$. This implies that $\dim_{\q}H_{n}(M;\q)$ is even; when $n$ is odd, the intersection form on $H_{n}(M)$ is skew-symmetric, therefore $\dim_{\q}H_{n}(M;\q)$ is even. Thus in both cases $\chi(M)$ is even. This shows that $g+(-1)^{n}$ is divisible by $m$.

Now we construct free $\z/m$-actions on $\sharp g(S^n \times S^n)$ when $g+(-1)^{n}$ is divisible by $m$. For $n$ odd, let $L_m^n$ be an $n$-dimensional lens space with fundamental $\z/m$, then the universal cover of $(L^{n}_{m}\times S^{n} )\sharp \frac{g-1}{m} (S^{n} \times S^{n})$ is $\sharp {g}(S^{n} \times S^{n})$. The deck transformation is an orientation-preserving free $\z/m$-action.

For $n$ even, let $L_m^{n+1}$ be an $(n+1)$-dimensional lens space with fundamental group $\z/m$, consider $L_{m}^{n+1} \times S^{n-1}$, the universal cover is $S^{n+1} \times S^{n-1}$, with the deck transformation as a free $\z/m$-action. Choose an appropriate equivariant embedding $\cup_{i=1}^{m} D^{n+1} \times S^{n-1} \hookrightarrow S^{n+1} \times S^{n-1}$, and do surgery. Surgery on one copy of $D^{n+1} \times S^{n-1}$ produces $S^{2n}$. Then surgery on the remaining $m-1$ copies of $D^{n+1} \times S^{n-1}$ embedded in $S^{2n}$ produces $\sharp_{m-1} (S^{n} \times S^{n})$. This gives an orientation-preserving free $\z/m$-action on $\sharp_{m-1}(S^{n} \times S^{n})$. Making equivariant connected sum with copies of $S^{n} \times S^{n}$'s gives rise to a free $\z/m$-actions on $\sharp {g}(S^{n} \times S^{n})$.
\end{proof}

Theorem \ref{thm:dim2} is a consequence of the topological classification of $4$-manifolds with finite cyclic fundamental group.

\begin{thm}\cite[Theorem C]{HK93}
Let $M$ be a closed, oriented $4$-manifold with finite cyclic fundamental group. Then $M$ is classified up to homeomorphism by the fundamental group, the intersection form on $H_2(M;\z)/\mathrm{Tors}$, the $w_2$-type, and the Kirby-Siebenmann invariant. Moreover, any isometry of the intersection form can be realized by a homeomorphism.
\end{thm}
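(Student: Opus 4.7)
The plan is to carry out the three-step modified surgery program of Kreck in the topological category. Finite cyclic groups are ``good'' in the sense of Freedman, so topological surgery and the topological $s$-cobordism theorem are available in dimension $4$, and the scheme outlined in the introduction applies with $BO$ replaced by $B\mathrm{STOP}$. First, classify closed oriented topological $4$-manifolds $M$ with $\pi_{1}(M)=\z/m$ by their $w_{2}$-type: Type~I ($w_{2}(\wtd M)\ne 0$), Type~II ($w_{2}(M)=0$), and Type~III ($w_{2}(M)\ne 0$ but $w_{2}(\wtd M)=0$). In each case read off the normal $1$-type $\xi\colon B\to B\mathrm{STOP}$: Type~II gives $B\simeq K(\z/m,1)\times B\mathrm{TopSpin}$, Type~I gives an untwisted Postnikov stage with a $K(\z/2,2)$ fibre, and Type~III gives the same Postnikov stage twisted by a non-trivial action of $\z/m$ on $\z/2$.

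Next, compute $\Omega_{4}^{\mathrm{STOP}}(B,\xi)$ by the Atiyah--Hirzebruch spectral sequence and identify the $\mathrm{Aut}(\xi)$-orbit of $(M,\ov\nu)$ with the tuple consisting of the signature, the Kirby--Siebenmann invariant, and the $\z$-valued intersection form on $H_{2}(M)/\mathrm{Tors}$ together with the recorded $w_{2}$-type. Given a normal $B$-bordism $W$ between two manifolds with matching invariants, the task is to kill the surgery obstruction $\theta(W)\in L_{5}^{s}(\z[\z/m])$. Using Wall's explicit computation of this $L$-group for cyclic groups together with the realization theorem (every element of $L_{5}$ is the obstruction of a normal self-bordism of $M\times[0,1]$), one modifies $W$ within its normal bordism class so that $\theta$ vanishes; Freedman--Quinn topological surgery then converts $W$ into an $s$-cobordism, and the topological $s$-cobordism theorem yields the desired homeomorphism $M\cong M'$.

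The realizability of isometries is handled by the same machinery: given an isometry $f$ of $H_{2}(M)/\mathrm{Tors}$ preserving the listed invariants, construct a normal self-bordism of $M$ inducing $f$, kill its surgery obstruction as above, and apply the $s$-cobordism theorem to obtain a self-homeomorphism realizing $f$. The principal obstacle is the surgery analysis in the last step---the combination of Wall's explicit computation of $L_{5}^{s}(\z[\z/m])$ (with the correct $K$-theoretic decoration) with the geometric realizability of its elements by automorphisms of trivial normal bordisms. Everything else is essentially bookkeeping once the $w_{2}$-type trichotomy and the corresponding normal $1$-types have been sorted out, with the Freedman--Quinn theory supplying the topological surgery input that makes the conclusion work in dimension $4$.
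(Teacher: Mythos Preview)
The paper does not prove this theorem at all: it is quoted verbatim from Hambleton--Kreck \cite[Theorem C]{HK93} and used as a black box to deduce Theorem~\ref{thm:dim2}. There is therefore nothing in the present paper to compare your proposal against.

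That said, your sketch is a faithful outline of how Hambleton and Kreck actually prove their Theorem~C in \cite{HK93}: the trichotomy into $w_2$-types I/II/III, the identification of the corresponding normal $1$-types over $B\mathrm{STOP}$, the bordism computation, and the analysis of the surgery obstruction in $L_5^s(\z[\z/m])$ combined with Freedman--Quinn topological surgery. If you intend to supply an independent proof rather than cite the result, your plan is the right one; the genuinely delicate part (which you correctly flag) is the interplay between the $L_5$-computation and the realization of its elements by normal self-bordisms, together with the cancellation argument needed to pass from ``stably homeomorphic'' to ``homeomorphic''. But for the purposes of the present paper none of this is needed---the authors simply invoke the result.
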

\begin{proof}[Proof of Theorem \ref{thm:dim2}]
Let $M$ be the quotient manifold of a free $\z/m$-action on $\sharp_g(S^2\times S^2)$. When $m$ is odd clearly $w_2(M)=0$. When $m$ is even, since $\widetilde M$ is spin, $w_2(M)$ is trivial on integral homology classes. The Wu formula implies $w_2(M)^2 = w_4(M) \equiv \chi(M) \equiv 0\pmod 2$ since $\chi(M)$ is even. Therefore $w_2(M)=0$. The signature of $M$ is $0$. Hence the intersection form on $H_2(M)/\mathrm{Tors}$ is even and indefinite, thus is determined by its signature and rank, where $\mathrm{rank}H_2(M)=2(g+1)/m-2$. The Kirby-Siebenmann invariant of $M$ is clearly $0$. Therefore by the above theorem all such quotient manifolds are homeomorphic, and a homeomorphism can be chosen to preserve the identification of the fundamental groups with $\z/m$. Hence there is a unique action up to topological conjugation.
\end{proof}

\section{The $n=3$ case}\label{sec:n3}
\subsection{The normal $2$-type and normal bordism group}\label{subsec:norm}
Let $M^6$ be the quotient of an orientation-preserving free $\z/m$-action on $\sharp_g (S^3 \times S^3)$. Then $\pi_1(M) = \z/m$, $\pi_2(M)=0$; When $m$ is odd, $M$ is spin; when $m$ is even, $M$ may be spin or non-spin.

When $M$ is spin, let
$$p \colon B=K(\z/m, 1) \times B\mathrm{Spin} \stackrel{c \times p_0}{\longrightarrow} BO \times BO \stackrel{\oplus}{\longrightarrow} BO$$
where $c$ is the trivial map, and $p_0$ is the canonical projection. Then $(B,p)$ is the normal $2$-type of $M$, and a normal $2$-smoothing $\bar \nu \colon M \to B$ is given by the classifying map $M \to K(\z/m,1)$ of $\pi_1(M)=\z/m$ and a spin structure of $M$.

When $M$ is non-spin (in this case $m$ must be even), let $\gamma$ be a complex line bundle over $K(\z/m,1)$ with $c_1(\gamma)=1 \in H^2(K(\z/m,1)) = \z/m$, then $w_1(\gamma)=0$, $w_2(\gamma)$ is the non-trivial element in $H^2(K(\z/m,1);\z/2) = \z/2$. Let $$p \colon B=K(\z/m, 1) \times B\mathrm{Spin} \stackrel{\gamma \times p_0}{\longrightarrow} BO \times BO \stackrel{\oplus}{\longrightarrow} BO$$
Then $(B,p)$ is the normal $2$-type of $M$, a normal $2$-smoothing $\bar \nu \colon M \to B$ is given by the classifying map $ f \colon M \to K(\z/m,1)$ of $\pi_1$ and a spin structure on $f^*\gamma \oplus \nu M$.

In the spin case the normal bordism group $\Omega_n(B,p)$ is identified with  the spin bordism group $\omsp_n(K(\z/m,1))$; in the non-spin case the normal bordism group is identified with the twisted spin bordism group $\omsp_n(K(\z/m,1);\gamma)$. We may use the Atiyah-Hirzebruch spectral sequence to compute these groups, the details are in the Appendix.

\begin{prop}\label{prop:bordism}
The normal bordism group $\Omega_{6}(B,p)=0$.
\end{prop}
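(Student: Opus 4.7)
The plan is to apply the Atiyah--Hirzebruch spectral sequence (AHSS). Since the fibre of $p$ is $B\mathrm{Spin}$, one has $\Omega_{6}(B,p) = \omsp_{6}(K(\z/m,1))$ in the spin case and $\Omega_{6}(B,p) = \omsp_{6}(K(\z/m,1);\gamma)$ in the non-spin case, and in both situations the AHSS takes the form
$$E^{2}_{p,q} = H_{p}(K(\z/m,1);\, \omsp_{q}) \Longrightarrow \Omega_{p+q}(B,p),$$
with the coefficient system in the $q$-th row untwisted, since $\gamma$ is orientable.

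First I would read off the $E^{2}$ entries on the total-degree-$6$ diagonal. Using the classical values $\omsp_{0}=\z$, $\omsp_{1}=\omsp_{2}=\z/2$, $\omsp_{3}=\omsp_{5}=\omsp_{6}=0$, $\omsp_{4}=\z$, together with the standard computation $H_{p}(K(\z/m,1);\z)=\z/m$ for $p>0$ odd and $0$ for $p>0$ even, only the entries $(5,1)$ and $(4,2)$ can be nonzero, and then only when $m$ is even (in which case each is a copy of $\z/2$). Thus for $m$ odd the entire diagonal vanishes already at $E^{2}$ and the proposition is immediate.

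For $m$ even one has to show these two $\z/2$-entries do not survive. Using the known fact that the $d_{2}$-differential of the spin AHSS is computed by the dual Steenrod square $Sq^{2}_{\ast}$ on mod $2$ reductions (with a $w_{2}(\gamma)$-correction in the non-spin case), together with $H^{\ast}(B\z/2;\z/2)=\z/2[x]$ and $Sq^{2}(x^{n})=\binom{n}{2}x^{n+2}$, the relation $Sq^{2}(x^{3})=x^{5}$ identifies $d_{2}\colon E^{2}_{5,1}\to E^{2}_{3,2}$ with an isomorphism $\z/2\to\z/2$, killing the $(5,1)$ entry. The main obstacle is the $(4,2)$ entry: since $Sq^{2}(x^{4})\equiv 0\pmod{2}$, the naive $d_{2}$ vanishes, so one must either identify a higher differential hitting it---the natural candidate being $d_{3}\colon E^{3}_{7,0}=\z/m\to E^{3}_{4,2}$, realised by a secondary cohomology operation---or, in the non-spin case, exploit the $w_{2}(\gamma)$-correction to produce a non-trivial $d_{2}$ from $E^{2}_{6,1}$. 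Once this last step is carried out, the diagonal of $E^{\infty}$ is zero and $\Omega_{6}(B,p)=0$ follows in both cases.
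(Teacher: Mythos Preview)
Your overall strategy---AHSS, vanishing for odd $m$, and identifying $E^{2}_{5,1}$ and $E^{2}_{4,2}$ as the only obstructions for even $m$---matches the paper, and your treatment of the non-spin case via the $w_{2}(\gamma)$-twisted $d_{2}$ is essentially what the paper does. The gap is in the spin case for even $m$, where you have the wrong $d_{3}$.

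The incoming differential $d_{3}\colon E^{3}_{7,0}\to E^{3}_{4,2}$ that you nominate is in fact \emph{zero}. The reason is geometric: $\rp^{7}$ is spin, so the edge homomorphism $\omsp_{7}(\rp^{\infty})\to H_{7}(\rp^{\infty})$ is surjective, forcing $E^{3}_{7,0}$ to consist of permanent cycles. What actually kills $E^{3}_{4,2}$ is the \emph{outgoing} differential $d_{3}\colon E^{3}_{4,2}\to E^{3}_{1,4}\cong H_{1}(K(\z/m,1);\omsp_{4})\cong \z/m$. The paper does not compute this as a secondary operation; instead it first invokes the known identification $\omsp_{6}(\rp^{\infty})\cong \Omega_{5}^{\mathrm{Pin}^{-}}=0$ to deduce that for $m=2$ this outgoing $d_{3}$ must be an isomorphism, and then for general even $m$ uses naturality of the spectral sequence along the map $\rp^{\infty}\to K(\z/m,1)$ induced by $\z/2\hookrightarrow \z/m$ to conclude that $d_{3}$ is nontrivial there as well.

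A smaller point: your mod $2$ cohomology computation is written only for $m=2$. For $m\equiv 0\pmod 4$ one has $H^{*}(K(\z/m,1);\z/2)\cong \z/2[x,y]/(x^{2})$ with $|x|=1$, $|y|=2$, $Sq^{1}y=0$, and one must redo the $Sq^{2}$ calculations in that ring (they still give the needed isomorphism $H^{3}\to H^{5}$, but not via the formula $Sq^{2}(x^{3})=x^{5}$).
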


\subsection{Model manifolds}
Let $L_{m}^{3}=L^{3}(m;1, 1)$ be the standard lens space with fundamental group $\z/m$. When $m$ is even, there is a unique orientable $4$-dimensional vector bundle $\xi$ over  $L_{m}^{3}$ with non-trivial $w_{2}$. Let $S(\xi)$ be the corresponding sphere bundle, then $\pi_{1}S(\xi) = \pi_{1}(L_{m}^{3})=\z/m$, and $w_{2}(S(\xi)) \ne 0$. The universal cover of $S(\xi)$ is  diffeomorphic to $S^{3} \times S^{3}$. Let $M_0$ be
$$(L_m^3 \times S^3 )\sharp \frac{g-1}{m}(S^3 \times S^3) \  \ \mathrm{or}  \ \ S(\xi) \sharp \frac{g-1}{m}(S^{3} \times S^{3})$$
as the model manifold in Theorem \ref{thm:dim3}, $M$ be the quotient manifold of an orientation-preserving free $\z/m$-action on $\sharp g (S^3 \times S^3)$. We are going to show that there is a diffeomorphism between $M$ and $M_0$, which preserves the given identification of $\pi_1$ with $\z/m$. Therefore the $\z/m$-action is conjugate to the standard $\z/m$-action on $\sharp g(S^3 \times S^3)$.

\subsection{The surgery obstruction}\label{sec:ob}
By Proposition \ref{prop:bordism} $M_0$ and $M$ are normally bordant in the normal $2$-type $B$. Let $(W, \bar \nu)$ be a normal bordism between them,  by \cite[Proposition 6]{Kreck99} there is a  surgery obstruction $\theta(W, \bar \nu) \in l^{\sim}_7(\z/m, +)$ to transforming $W$ to an $s$-cobordism by surgery. (It's easy to check in this case $\langle w_{4}(B), \pi_{4}(B)\rangle \ne 0$, hence the obstruction is in the tilde $l$-monoid.) In the sequel denote the integral group ring  $ \z[\z/m]$ by $\Lambda$. Elements in the  abelian monoid $l^{\sim}_7(\z/m, +)$ are equivalence classes $[H_-^r(\Lambda), V]$, where  $H_-^r(\Lambda)$ is the skew-symmetric hyperbolic form of rank $r$ over $\Lambda$, and $V$ is a based half rank direct summand in $H_-^{r}(\Lambda)$. There is an abelian subgroup $L^{\sim}_7(\z/m,+) \subset l^{\sim}_7(\z/m,+)$, consisting of those $[H_-^r(\Lambda), V]$ in which $V$ is a Lagrangian.

From the geometric construction of $M_{0}$ we have
$$\mathrm{Ker}(\pi_3(M_0) \to \pi_3(B))/\mathrm{rad} = \z^2 \perp H_-^r(\Lambda)$$
with $r=(g-1)/m$. Here $\z^2$ is a trivial $\Lambda$-module, with a hermitian form $(\lambda_{0}, \widetilde \mu_{0})$. On the standard basis $\{u_{1}, u_{2}\}$,  $\lambda_0(u_1, u_2)=s=1+g+g^2+\cdots +g^{m-1} \in \Lambda$ is the norm element, $\lambda_0(u_i, u_i)=0$ and $\widetilde \mu_0(u_i)=0$ for  $i=1,2$. The surgery obstruction $\theta(W, \bar \nu)$ can be described by the following lemma.

\begin{lem}\label{lem:surj}
If $S_{1} \to \z^{2}$ and $S_{2} \to H_{-}^{r}(\Lambda)$ are surjections from free based modules $S_{i}$, with induced hermitian forms,  then there are half rank embeddings $S_{i} \subset H_{-}^{t_{i}}(\Lambda)$,  such that $\theta(W, \bar \nu)=[H_-^{t_{1}}(\Lambda), S_{1}] + [H_-^{t_{2}}(\Lambda), S_{2}] + \rho$, with $\rho \in L_7^{\sim}(\z/m)$.
\end{lem}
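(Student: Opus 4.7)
The plan is to represent the surgery obstruction $\theta(W,\bar\nu)$ via Kreck's framework: after performing surgery on $W$ below the middle dimension to make $\bar\nu$ a $3$-equivalence, \cite{Kreck99} expresses the obstruction as a class $[H_-^N(\Lambda), V]$, where $V$ is the based half-rank summand arising from the middle-dimensional kernel at the $M_0$-end of the bordism. The task is then to split $V$ orthogonally along the given decomposition
$$\mathrm{Ker}(\pi_3(M_0)\to\pi_3(B))/\mathrm{rad} = \z^2 \perp H_-^r(\Lambda),$$
with a compatible decomposition of the ambient hyperbolic form.

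Next I would use the geometric construction of $M_0$ to group the generators of $V$ according to the two summands $\z^2$ and $H_-^r(\Lambda)$: the former come from the $L_m^3\times S^3$ or $S(\xi)$ factor (with the radical contribution coming from $L_m^3$), the latter from the equivariant connected-sum copies of $S^n\times S^n$. Given the surjections $S_i$ from free based modules onto these summands, I would lift bases of $S_1$ and $S_2$ to families of framed $3$-spheres in $W$ refining the natural generators; Wall-type stabilization then embeds each $S_i$ as a based half-rank summand of a hyperbolic form $H_-^{t_i}(\Lambda)$, with the induced hermitian forms on $S_i$ matching the pullbacks along the given surjections. These two contributions yield the summands $[H_-^{t_1}(\Lambda), S_1]$ and $[H_-^{t_2}(\Lambda), S_2]$ of $\theta(W,\bar\nu)$.

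The main obstacle is to show that the residual summand $\rho$ actually lies in $L_7^\sim(\z/m)\subset l_7^\sim(\z/m,+)$, i.e.\ that it is represented by a pair whose distinguished half-rank summand is a genuine Lagrangian, not merely a based direct summand. After the $\z^2$ and $H_-^r(\Lambda)$ contributions have been absorbed into the $S_i$, the complementary part of $V$ corresponds to kernel classes on which the radical vanishes and whose intersection pairing over $\Lambda=\z[\z/m]$ is a non-singular skew-hermitian form on a free module arising from hyperbolic pairs of embedded spheres; such a form admits a Lagrangian given geometrically by half the pairs, which places $\rho$ in $L_7^\sim(\z/m)$. Summing the three contributions then gives the claimed decomposition of $\theta(W,\bar\nu)$.
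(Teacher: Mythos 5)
The paper proves this lemma by a one-line citation: it is a direct application of \cite[Proposition 2 (3)]{HKT94} and \cite[Proposition 8 (ii)]{Kreck99}, which give precisely the general statement that a surjection from a free based module onto a direct summand of the reduced kernel $\mathrm{Ker}(\pi_3(M_0)\to\pi_3(B))/\mathrm{rad}$ yields a decomposition of the surgery obstruction into a term $[H_-^t(\Lambda),S]$ plus an element of the $L$-group. Your proposal instead tries to reconstruct that argument ab initio.

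The gap is at the crucial last step. Your justification for $\rho\in L_7^{\sim}(\z/m)$ is that "the complementary part of $V$ corresponds to kernel classes \dots whose intersection pairing \dots is a non-singular skew-hermitian form on a free module arising from hyperbolic pairs of embedded spheres; such a form admits a Lagrangian given geometrically by half the pairs." This is an assertion, not an argument: nothing in the setup guarantees that after splitting off the two $S_i$-pieces the remaining summand of the kernel is (stably) hyperbolic, let alone that it comes equipped with a preferred geometric Lagrangian. Establishing that the residual piece lies in the $L$-subgroup of the $l$-monoid is exactly the algebraic content of \cite[Proposition 2 (3)]{HKT94}/\cite[Proposition 8 (ii)]{Kreck99}; it requires manipulating the $l$-monoid (absorbing the $S_i$ after stabilization, showing the complementary summand admits a based Lagrangian complement up to elementary moves), and your proposal does not carry this out. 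The geometric picture of lifting generators to framed $3$-spheres and stabilizing is a reasonable heuristic, but it does not by itself identify the residual class as Lagrangian. To make this rigorous you would either have to reprove the cited propositions, or simply invoke them as the paper does.
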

\begin{proof}
This is from \cite[Proposition 2 (3)]{HKT94} (see also \cite[Proposition 8 (ii)]{Kreck99}).
\end{proof}

Here we may simply take the obvious  $S_{1}=\Lambda^2 \to \z^{2}$ and $S_{2}= H_-^r(\Lambda) \stackrel{\mathrm{id}}{\rightarrow} H_-^r(\Lambda)$, where on the standard basis $\{ v_{1}, v_{2}\}$ of $S_{1}$ the induced hermitian form is $\lambda(v_{1}, v_{2})=s$, $\lambda(v_{i}, v_{i})=0$ and $\widetilde \mu(v_i)=0$. Then a further simplification of the surgery obstruction is based on the following lemma.

\begin{lem}\label{lem:lag} \cite[Proposition 8 (iii)]{Kreck99}
If $S_{i}$ has a Lagrangian complement in $H_-^{t_{i}}(\Lambda)$, then $[H_-^{t_{i}}(\Lambda), S_{i}]+\rho'_{i}$ is elementary for some $\rho'_{i} \in L_7^{\sim}(\z/m)$.
\end{lem}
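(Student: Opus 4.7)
The plan is to reduce the claim to a direct algebraic construction in the $l$-monoid, with $\rho'_i$ chosen in a natural way from the given Lagrangian complement. Take $\rho'_i := [H_-^{t_i}(\Lambda), L_i]$, which lies in $L_7^{\sim}(\z/m)$ since $L_i$ is a Lagrangian with its preferred basis coming from the decomposition $H_-^{t_i}(\Lambda) = S_i \oplus L_i$. Then by the definition of the monoid operation in $l_7^{\sim}$, the sum $[H_-^{t_i}(\Lambda), S_i] + \rho'_i$ is represented by the orthogonal form $H_-^{t_i}(\Lambda) \oplus H_-^{t_i}(\Lambda) \cong H_-^{2t_i}(\Lambda)$ together with the based half-rank submodule $S_i \oplus L_i$. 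To call this class elementary, it suffices to find an ambient isometry of $H_-^{2t_i}(\Lambda)$ (preserving the skew-hermitian form $\lambda$ and the quadratic refinement $\widetilde\mu$) that carries the preferred basis of $S_i \oplus L_i$ onto a standard Lagrangian basis, up to an automorphism which is admissible within the equivalence relation defining $l_7^{\sim}$.

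To construct the isometry, I would work with a $\Lambda$-basis of $H_-^{t_i}(\Lambda)$ adapted to $S_i \oplus L_i$, in which the Gram matrix of $\lambda$ has the block shape
\[
\begin{pmatrix} A & B \\ -B^{*} & 0 \end{pmatrix},
\]
where $A$ is the skew-hermitian matrix of $\lambda|_{S_i}$ and $B$ is the $S_i \times L_i$ pairing matrix. Because $L_i$ is a direct summand complement in a nondegenerate form, $B$ is invertible over $\Lambda$. In the doubled form I can then replace each basis vector $s \in S_i$ (in the first factor) by $s + \ell$ for an appropriate $\ell \in L_i$ in the \emph{second} factor, chosen via the inverse of $B$ so as to kill the nonzero contributions from $A$, and symmetrically interchange the roles of $S_i$ and $L_i$ between the two summands. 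This produces an isometry of $H_-^{2t_i}(\Lambda)$ which carries $S_i \oplus L_i$ onto the standard Lagrangian $\mathrm{span}_{\Lambda}\{e_1,\ldots,e_{2t_i}\}$, and hence realizes the sum as elementary.

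The hard part will be the bookkeeping of the quadratic refinement $\widetilde\mu$ and of the preferred basis, since $l_7^{\sim}$ (unlike the $L$-group) is sensitive to both. The ambient isometry must preserve $\widetilde\mu$-values on the transformed basis, which may force one to enlarge $\rho'_i$ by a further based standard hyperbolic-Lagrangian summand in $L_7^{\sim}(\z/m)$ to absorb the discrepancy in $\widetilde\mu|_{L_i}$; this is harmless because such stabilizations lie in $L_7^{\sim}(\z/m)$ by construction. Once this refinement is arranged, the class $[H_-^{t_i}(\Lambda), S_i] + \rho'_i$ is elementary, as required.
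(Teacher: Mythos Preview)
The paper does not prove this lemma; it is quoted directly from \cite[Proposition~8(iii)]{Kreck99} and used as a black box.

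Your argument has a genuine gap at the key step. You form the sum $[H_-^{t_i}(\Lambda),S_i]+[H_-^{t_i}(\Lambda),L_i]$ and represent it by $(H_-^{2t_i}(\Lambda),\,S_i\oplus L_i)$, with $S_i$ sitting in the first orthogonal summand and $L_i$ in the second. You then claim that replacing each $s\in S_i$ by $s+\ell$, with $\ell$ taken from the second-summand copy of $L_i$, will ``kill the nonzero contributions from $A$''. But the two summands of $H_-^{2t_i}(\Lambda)$ are $\lambda$-orthogonal and $L_i$ is a Lagrangian, so
\[
\lambda(s+\ell,\,s'+\ell') \;=\; \lambda(s,s') + \lambda(s,\ell') + \lambda(\ell,s') + \lambda(\ell,\ell') \;=\; \lambda(s,s').
\]
Thus the induced form on $S_i\oplus L_i$ is exactly $\bigl(\begin{smallmatrix}A & 0\\ 0 & 0\end{smallmatrix}\bigr)$, unchanged. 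Since any ambient isometry of $H_-^{2t_i}(\Lambda)$ preserves the induced form on a submodule, no isometry can carry $S_i\oplus L_i$ onto a Lagrangian unless $A=0$. For the $S_1$ actually used in this paper one has $\lambda(v_1,v_2)=s\ne 0$, so $A\ne 0$ and the isometry you describe cannot exist. The adjustment you propose in the last paragraph concerns only the quadratic refinement $\widetilde\mu$ and further stabilisation in $L_7^{\sim}$; it cannot repair an obstruction coming from $\lambda$ itself.

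The source of the difficulty is the meaning of ``elementary'' in Kreck's $l$-monoid: a representative $(H,V)$ is elementary when $V$ admits a \emph{complementary} Lagrangian in $H$, not when $V$ can be carried by an isometry onto a standard Lagrangian. Kreck's Proposition~8(iii) is about reconciling the specific embedding $S_i\hookrightarrow H_-^{t_i}(\Lambda)$ that has a Lagrangian complement with the a priori different embedding that arises in the decomposition of Lemma~\ref{lem:surj}; the discrepancy between these two embeddings is what is absorbed into $\rho'_i\in L_7^{\sim}$. It is not an argument about making $S_i\oplus L_i$ totally isotropic, and your construction is aimed at the wrong target.
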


Clearly $S_{2}$ has a Lagrangian complement, since any isometric embedding of $H_{-}^{r}(\Lambda)$ into $H_{-}^{2r}(\Lambda)$ can be moved to the standard position by an isometry and hence has a Lagrangian complement.  For $S_{1}$ we have the following

\begin{lem}\label{lem:s}
Let $S=\mathrm{span}_{\Lambda}(v_{1}, v_{2})$ be a rank $2$ based free $\Lambda$-module with a skew hermitian form $(\lambda, \widetilde \mu_{0})$, given by $\lambda(v_{1}, v_{2})=s$, $\lambda(v_{i}, v_{i})=0$ and $\widetilde{\mu}(v_{i})=0$. Then for any isometric embedding of $S$ into $H_-^2(\Lambda)$ as a half rank direct summand there is a Lagrangian complement.
\end{lem}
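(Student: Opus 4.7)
The plan is to first put the embedding $\iota$ into a normal form using the symmetries of $H_-^2(\Lambda)$, and then construct a Lagrangian complement explicitly, exploiting algebraic identities peculiar to the norm element $s$.

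First, I would produce a normal form for $\iota$. Since $\iota(\Lambda v_1)$ is a direct summand of $H_-^2(\Lambda)$, the element $\iota(v_1)$ is primitive isotropic; unimodularity of the form yields $y_0$ with $\lambda(\iota(v_1), y_0) = 1$. The $\z$-submodule of anti-self-adjoint elements of $\Lambda = \z[\z/m]$ is spanned by $g^i - g^{-i}$, hence coincides with the image of $c \mapsto \bar c - c$. Using this, one modifies $y_0$ by a multiple of $\iota(v_1)$ to arrange $\lambda(y, y) = 0$ and $\widetilde\mu(y) = 0$, giving a splitting $H_-^2(\Lambda) = \langle \iota(v_1), y \rangle \perp W$ with $W \cong H_-^1(\Lambda)$. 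Writing $\iota(v_2) = \alpha \iota(v_1) + s y + w$ with $w \in W$ (the $y$-coefficient is forced to be $s$ by $\lambda(\iota(v_1), \iota(v_2)) = s$), and replacing $\iota(v_2)$ by $\iota(v_2) - \alpha \iota(v_1)$ (which does not change the submodule $\iota(S)$ nor its induced form), one may assume $\iota(v_2) = sy + w$. The isotropy of $\iota(v_2)$ then reduces to $\lambda(w, w) = 0$, since $(\alpha - \bar \alpha) s = 0$ automatically: the augmentation ideal annihilates $s$, and $\alpha - \bar \alpha$ lies in it. In a hyperbolic basis $(e, f)$ of $W$ with $w = w_1 e + w_2 f$, isotropy reads $\bar w_1 w_2 = \bar w_2 w_1$, and the direct-summand hypothesis for $\iota(S)$ becomes the unit-ideal relation $(s, w_1, w_2) = \Lambda$.

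Next, I would construct the Lagrangian complement. Fix $\alpha_0, \beta_1, \beta_2 \in \Lambda$ with $\alpha_0 s + \beta_1 w_1 + \beta_2 w_2 = 1$, and take $L_1 = \alpha_0 y + \beta_1 e + \beta_2 f$, corrected by a $\Lambda$-multiple of $\iota(v_1)$ to become isotropic. The key identity $s \cdot x = \epsilon(x) \cdot s$ (valid because $g s = s$ for every $g \in \z/m$), combined with $\bar w_1 w_2 = \bar w_2 w_1$, simplifies $\lambda(L_1, L_1)$ to an anti-self-adjoint element of $\Lambda$, which can then be cancelled via the surjectivity of $c \mapsto \bar c - c$ invoked above. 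A second element $L_2$ is constructed analogously so that $\lambda(L_1, L_2) = 0$ and $\widetilde\mu(L_i) = 0$, and the unit-ideal relation guarantees that $\langle L_1, L_2 \rangle$ is a direct summand of $H_-^2(\Lambda)$ complementary to $\iota(S)$.

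The main obstacle is arranging the three conditions---isotropy of each $L_i$, orthogonality $\lambda(L_1, L_2) = 0$, and complementarity---simultaneously. Individually each reduces to a solvable equation over $\Lambda$, but coordinating them requires exploiting the freedom in the choice of $(\alpha_0, \beta_1, \beta_2)$ modulo the syzygy module of $(s, w_1, w_2)$, together with the freedom to act by isometries of $W = H_-^1(\Lambda)$ and by $\iota(S)$-valued corrections. The specific structure of the group ring $\Lambda = \z[\z/m]$ and the annihilator relation $I \cdot s = 0$ are decisive.
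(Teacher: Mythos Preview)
Your normal-form reduction---transitivity on primitive isotropic vectors to place $\iota(v_1)$, splitting off the hyperbolic plane $\langle\iota(v_1),y\rangle$, and recording $\iota(v_2)=sy+w_1e+w_2f$ with $(s,w_1,w_2)=\Lambda$ and $\bar w_1w_2=\bar w_2w_1$---is exactly how the paper begins. The gap is in the second half.

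Your correction step does not do what you claim. Adding $c\,\iota(v_1)$ to $L_1=\alpha_0y+\beta_1e+\beta_2f$ changes $\lambda(L_1,L_1)$ by $c\bar\alpha_0-\bar c\alpha_0$, not by an arbitrary $c-\bar c$; unless $\alpha_0$ is a unit this does not sweep out all anti-self-adjoint elements (for instance if $\alpha_0\in(s)$ the correction is identically zero), and the same obstruction appears for $\widetilde\mu(L_1)$. The sentence ``a second element $L_2$ is constructed analogously'' is not an argument: nothing in your setup produces a second vector, and the Bezout relation $\alpha_0s+\beta_1w_1+\beta_2w_2=1$ by itself neither makes $\langle L_1,L_2\rangle$ complementary to $\iota(S)$ nor forces $\lambda(L_1,L_2)=0$. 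You correctly identify in your last paragraph that coordinating isotropy, orthogonality and complementarity is the crux, but you have not done it; invoking ``freedom modulo the syzygy module'' is a restatement of the problem, not a solution.

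The paper supplies the missing algebraic input. One proves (Lemma~\ref{lem:module}) that every ideal $A\subset\Lambda$ with $A+(s)=\Lambda$ is principal, generated by an element $u=1+g+\cdots+g^{l-1}$ with $(l,m)=1$, and moreover $uv+as=1$ for an explicit $v\in\Lambda$ and $a\in\z$. Applied to $A=w_1\Lambda+w_2\Lambda$ this gives $w=u\cdot(\alpha e+\beta f)$ with $\alpha e+\beta f$ \emph{primitive} in $W$; one then checks $\widetilde\mu(\alpha e+\beta f)=\widetilde\mu(ve+sf)$ and uses transitivity again, this time inside $W$, to move $w$ to $u(ve+sf)$. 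In this final normal form the Lagrangian complement is written down by inspection and the $4\times4$ determinant is literally $uv+as=1$. There is also a genuine odd/even dichotomy: for $m$ even one must first apply extra transvections to control the $\iota(v_1)$-coefficient of $\iota(v_2)$, and the $\widetilde\mu$-matching involves a parity computation with the class $[g^{m/2}]\in\Lambda/\langle1,x+\bar x\rangle$. Your proposal does not distinguish the two cases at all.
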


We postpone the proof of this technical lemma to the end of this section.  By the above lemmas we have shown that the surgery obstruction $\theta(W, \bar \nu)$ is essentially in $L^{\sim}_7(\z/m,+)$, so we now analyze this group. There is an exact sequence
$$0 \to L^{s \sim}_7(\z/m, +) \to L^{\sim}_7(\z/m,+) \stackrel{\tau}{\rightarrow} Wh(\z/m)$$
where $\tau$ is the torsion of the matrix mapping the given basis of $V$ together with the induced basis of $V^{\perp}$ to the standard basis of $H_-^r(\Lambda)$. Note that $\tau$ factors through $\tau \colon U(\Lambda) \to Wh(\z/m)$, where $U(\Lambda)$ is the stable unitary group over $\Lambda$. For any matrix $A \in U(\Lambda)$, we have $A J A^{\dag}=J$, where $A^{\dag}$ is the transpose conjugate of $A$, and $$J=\left ( \begin{array}{cc}
0 & I \\
-I & 0 \end{array} \right )$$
Since $\tau (A^{\dag})=\tau(A)$ (\cite[Lemma 6.7]{Mil66}) we see that $\tau(A)$ is a $2$-torsion. But $Wh(\z/m)$ is a free abelian group \cite{Mil66}, therefore $\tau$ is trivial and $L^{s \sim}_{7}(\z/m,+) \cong L^{\sim}_{7}(\z/m,+)$.

The following lemma due to M.~Kreck relates the tilde $L$-group to the ordinary $L$-group. It might have independent interest in the classification of $6$- and $14$-manifolds via modified surgery, e.~g.~the classification of certain $6$-manifolds with surface fundamental groups in \cite{HK16}.

\begin{lem}[Kreck]\label{lem:kr}
For any group $\pi$ there is a surjection  $L^{s}_{3}( \pi) \to L^{s \sim }_{3}(\pi)$.
\end{lem}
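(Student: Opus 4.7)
The plan is to define a natural homomorphism $\Phi \colon L^s_3(\pi) \to L^{s\sim}_3(\pi)$ and then to exhibit an explicit preimage for every class in the target.

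First, unpack both definitions. An element of $L^s_3(\pi)$ is represented by a simple skew-hermitian formation $(H, L_0, L_1)$, namely a nonsingular skew-hermitian hyperbolic form $H = H^r_-(\Lambda)$ over $\Lambda = \z[\pi]$ equipped with two Lagrangians $L_0, L_1 \subset H$ both of which have simple torsion relative to the standard basis of $H$; equivalence is generated by stabilization and the vanishing of elementary formations, i.~e.~those in which $L_0, L_1$ are the two complementary standard Lagrangians of a hyperbolic form. An element of $L^{s\sim}_3(\pi)$, by contrast, is represented by a single pair $[H, V]$ with $V \subset H$ a based Lagrangian of simple torsion, subject to stabilization and to the relation $[H, V] = 0$ whenever $V$ admits a Lagrangian complement inside $H$.

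Second, define $\Phi$ on representatives by
$$\Phi([H, L_0, L_1]) = [H, L_1] - [H, L_0].$$
Orthogonal-sum additivity is immediate. On an elementary formation both $L_0$ and $L_1$ are complementary standard Lagrangians of $H$, hence each is a Lagrangian complement of the other, so $[H, L_0] = [H, L_1] = 0$ in $L^{s\sim}_3(\pi)$, and elementary formations are sent to $0$. A routine check shows $\Phi$ is also invariant under isometries of the ambient form $H$, and therefore descends to a homomorphism $L^s_3(\pi) \to L^{s\sim}_3(\pi)$.

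Third, prove surjectivity directly: given $[H, V] \in L^{s\sim}_3(\pi)$, let $L_0 \subset H$ be one of the standard Lagrangians. Being standard, $L_0$ is automatically simple, and $V$ is simple by hypothesis, so $(H, L_0, V)$ is a simple formation representing a class in $L^s_3(\pi)$. Its image under $\Phi$ is $[H, V] - [H, L_0]$, and since $L_0$ admits a Lagrangian complement in $H$ (namely, the other standard Lagrangian), $[H, L_0] = 0$ in $L^{s\sim}_3(\pi)$. Hence $\Phi([H, L_0, V]) = [H, V]$, yielding surjectivity.

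The main technical obstacle is confirming that $\Phi$ is well-defined on equivalence classes of formations, that is, that isometries of $H$ and the stabilization moves generating the equivalence relation on $L^s_3$ all produce differences $[H, L_1] - [H, L_0]$ which agree in $L^{s\sim}_3$. Once the vanishing on elementary formations is established as above, this amounts to a careful but essentially bookkeeping-level comparison of the two equivalence relations, using that any isometry of $H$ sending one simple Lagrangian to another induces the trivial change of class in Kreck's tilde $L$-group.
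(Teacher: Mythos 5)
There is a genuine gap in your surjectivity argument, and it sits exactly at the point where the content of the lemma lives. The groups $L^s_3(\pi)$ and $L^{s\sim}_3(\pi)$ are $L$-groups of quadratic forms over the \emph{same} hyperbolic modules but with \emph{different} form parameters: the quadratic refinement for $L^s_3$ takes values in $\z\pi/\{a+\bar a\}$, while for $L^{s\sim}_3$ it takes values in the coarser quotient $\z\pi/\langle 1, a+\bar a\rangle$. Consequently, ``$V$ is a Lagrangian'' means a strictly weaker condition in the tilde group than in the ordinary one: a $\Gamma$-Lagrangian (vanishing of $\widetilde\mu$ modulo the coarser form parameter $\Gamma=\langle 1,a+\bar a\rangle$) need not be a $\Lambda$-Lagrangian (vanishing of $\mu$ modulo $\Lambda=\{a+\bar a\}$). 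Your preimage candidate $(H, L_0, V)$ for a given $[H,V]\in L^{s\sim}_3(\pi)$ is therefore not a legitimate formation in $L^s_3(\pi)$, because $V$ need not be a Lagrangian for the finer quadratic structure; $\mu|_V$ may be nonzero even though $\widetilde\mu|_V=0$. Writing down the formation simply begs the question of whether $[H,V]$ lifts.

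The precise obstruction to such a lift is what Bak's relative term $S(\Gamma/\Lambda)\cong\z/2$ encodes, and the paper's proof goes through exactly the exact sequence
$$L^s_3(\pi) \to L^{s\sim}_3(\pi) \to S(\Gamma/\Lambda) \to L^s_2(\pi) \to L^{s\sim}_2(\pi)\to 0,$$
then shows $S(\Gamma/\Lambda)\to L^s_2(\pi)$ is injective by naturality in $\pi$ and the computation $L^s_2(1)\cong\z/2$, $L^{s\sim}_2(1)=0$ (so the composite $\z/2\to L^s_2(1)\hookrightarrow L^s_2(\pi)$ is injective, using that $L^s_2(1)\to L^s_2(\pi)$ is split by the augmentation $\pi\to 1$). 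Exactness then kills the obstruction and yields surjectivity. Your proposal never engages with the form-parameter discrepancy, and the ``bookkeeping-level comparison'' you defer is precisely where the nontrivial $\z/2$ would resurface if the argument were pushed through. If you want a direct argument avoiding Bak's exact sequence, you would still need to show, given $[H,V]\in L^{s\sim}_3(\pi)$, that $V$ can be modified (by adding a form from $\Gamma/\Lambda$ and stabilizing) to a genuine $\Lambda$-Lagrangian --- which is essentially re-proving the injectivity of $S(\Gamma/\Lambda)\to L^s_2(\pi)$ by hand.
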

\begin{proof}
The groups $L^{s \sim }_{*} ( \pi) $ and $L^{s}_{*}( \pi)$  are $L$-groups of quadratic forms with form parameter $\Gamma=\{ 1, a+\bar a \ | \ a \in \z\pi \}$ and $\Lambda= \{ a+\bar a \ | \ a \in \z\pi \}$  respectively. By \cite[Theorem 11.2]{Bak}  there is an exact sequence
$$L^{s}_{3}(\pi) \to L^{s \sim}_{3}( \pi) \to S(\Gamma/\Lambda) \to L^{s}_{2}( \pi) \to L^{s \sim }_{2}(\pi) \to 0$$
with $S(\Gamma/\Lambda)=\z/2$.
By the commutative diagram
$$\xymatrix{
\z/2 \ar [r] & L^{ s}_{2}(\pi)  \ar[r] & L^{s \sim}_{2}(\pi)  \\
\z/2 \ar[r] \ar[u]^{=} & L^{ s}_{2}(1)  \ar[u] \ar[r] & L^{s \sim}_{2}(1 ) \ar[u] }
$$
and the facts that $L^{s}_{2}(1) \cong \z/2$, $L^{s \sim }_{2}(1)=L^{2}(\z)=0$ we see that $L^{s}_{3}( \pi) \to L^{s \sim }_{3}(\pi)$ is surjective.
\end{proof}

The $L^s_*$-groups of $\mathbb Z/m$ are known (c.~f.\cite[\S 5.4]{Wall76} and \cite[p.227]{HT99}):  when $m$ is odd $ L^s_7(\z/m)=0$;  when $m$ is even $L^s_7(\mathbb Z/m) \cong \mathbb Z/2$, detected by a codimension $1$ Arf invariant. Therefore we have shown

\begin{prop}\label{prop:L}
When $m$ is odd $L^{\sim}_7(\z/m)=0$; when $m$ is even $L^{\sim }_7(\mathbb Z/m)$ is either trivial or isomorphic to $\mathbb Z/2$, in the latter case it is detected by a codimension $1$ Arf invariant.
\end{prop}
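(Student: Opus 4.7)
The plan is to deduce Proposition \ref{prop:L} by combining three ingredients that are already in place: the isomorphism $L^{s\sim}_7(\z/m)\cong L^{\sim}_7(\z/m)$ established in the paragraph immediately preceding the proposition (via the triviality of $\tau$ on $U(\Lambda)$), the surjection $L^s_3(\pi)\to L^{s\sim}_3(\pi)$ from Lemma \ref{lem:kr} together with the $4$-periodicity of the $L$-groups (so that the same surjection holds in degree $7$), and the classical computation of $L^s_7(\z/m)$ due to Wall. Concretely, I would write down the composition
\[
L^s_7(\z/m)\ \twoheadrightarrow\ L^{s\sim}_7(\z/m)\ \xrightarrow{\cong}\ L^{\sim}_7(\z/m),
\]
and read off the conclusion from the known value of the leftmost group.

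For $m$ odd, $L^s_7(\z/m)=0$ by Wall's computation, so the surjection forces $L^{\sim}_7(\z/m)=0$, which finishes this case. For $m$ even, Wall's calculation gives $L^s_7(\z/m)\cong\z/2$, detected by a codimension $1$ Arf invariant obtained by restricting a quadratic form over $\Lambda$ to its underlying form over $\z[\z/2]$ (the $\z/2\subset\z/m$ subgroup) and taking Arf. Since the composition above is a surjection from $\z/2$, the image $L^{\sim}_7(\z/m)$ is either $0$ or $\z/2$; in the latter case the isomorphism is witnessed by the same codimension $1$ Arf invariant because the invariant is natural with respect to the forgetful map on form parameters (it only uses the underlying skew-symmetric structure together with a quadratic refinement, which is retained when passing from $L^s$ to $L^{s\sim}$).

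The only delicate point is checking that the codimension $1$ Arf invariant indeed descends to $L^{s\sim}_7$, i.e.\ that it is well-defined on equivalence classes of forms whose quadratic refinement takes values in $\Gamma=\{1,a+\bar a\}$ rather than $\Lambda=\{a+\bar a\}$. This is essentially formal: the Arf invariant in question is extracted from the Browder–Livesay style codimension one splitting, which only records a $\z/2$-valued quadratic refinement on a $\z/2$-vector space, and all elements of $\Gamma/\Lambda=\z/2$ become trivial after reduction mod $2$. Thus the main (and only real) obstacle is bookkeeping rather than any new algebra, and the proposition follows.
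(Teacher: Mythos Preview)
Your proposal is correct and follows essentially the same route as the paper: the proposition is stated in the paper immediately after assembling exactly the three ingredients you list (the isomorphism $L^{s\sim}_7\cong L^{\sim}_7$ from the torsion argument, Kreck's surjection $L^s_3\twoheadrightarrow L^{s\sim}_3$ combined with periodicity, and Wall's values of $L^s_7(\z/m)$), and the conclusion is read off from the composite surjection just as you do. Your extra paragraph on the Arf invariant descending is harmless but not really needed: once the surjection from $\z/2$ is an isomorphism, the detecting invariant on the source automatically detects the target.
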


\begin{proof}[Proof of Theorem \ref{thm:dim3}]
For a normal $B$-bordism $W$ between $M_0$ and $M$, by Lemma \ref{lem:surj}, Lemma \ref{lem:lag}, Proposition \ref{prop:L}, and the fact that there is a closed $7$-manifold with $B$-structure and non-trivial codimension $1$ Arf invariant, by taking the disjoint union with this manifold, we have a $B$-bordism  $(W, \bar \nu)$  with elementary surgery obstruction $\theta(W, \bar \nu)$. Also $M$ and $M_{0}$ have equal Euler characteristic, therefore $M$ is diffeomorphic to $M_{0}$ by a diffeomorphism which  induces the given identification of $\pi_{1}$.
\end{proof}

For the proof of Lemma \ref{lem:s} we need the following auxiliary lemma.

\begin{lem}\label{lem:module}
Let $s=1+g+g^{2}+\cdots + g^{m-1}$ be the norm element in the group ring $\Lambda=\z[\z/m]$, $A \subset \Lambda$ be an ideal such that $A+(s)=\Lambda$. Then $A=u \cdot \Lambda$ where $u=1+g+g^{2}+\cdots +g^{l-1}$ for some positive integer $l$ coprime  to $m$.
\end{lem}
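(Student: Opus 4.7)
My plan has two stages. First I would determine the structure of $\Lambda/A$, and then verify that the explicit element $u$ generates the resulting ideal.

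For the first stage, the condition $A + (s) = \Lambda$ means $s$ is a unit in $\Lambda/A$. Since $(g-1)s = 0$ in $\Lambda$, this forces $g - 1 \in A$, so the projection $\Lambda \to \Lambda/A$ factors through the augmentation $\varepsilon \colon \Lambda \to \z$. Hence $\Lambda/A \cong \z/l$ for some $l \ge 0$; since the image of $s$ in $\z/l$ equals $m$ and must be invertible, $\gcd(l,m)=1$, and $A = \varepsilon^{-1}(l\z) = I + l\Lambda$, where $I = (g-1)\Lambda$ is the augmentation ideal.

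For the second stage, I need to show $u\Lambda = I + l\Lambda$. The inclusion $u\Lambda \subseteq I + l\Lambda$ is routine, since $u = l + \sum_{i=1}^{l-1}(g^i - 1) \in l\Lambda + I$. The substantive step, which I expect to be the main obstacle, is the reverse inclusion: I must exhibit both $g-1$ and $l$ in $u\Lambda$.

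For this I would choose an integer $k$ with $lk \equiv 1 \pmod{m}$ (available because $\gcd(l,m) = 1$) and introduce the auxiliary element $w = 1 + g^l + g^{2l} + \cdots + g^{(k-1)l}$. The identity $u(g-1) = g^l - 1$ together with the telescoping $(g^l - 1)w = g^{kl} - 1 = g - 1$ immediately shows $g - 1 \in u\Lambda$. To exhibit $l \in u\Lambda$ I would compute the product $uw$ directly: the exponents $i + jl$ with $0 \le i < l$, $0 \le j < k$ enumerate $\{0, 1, \ldots, lk-1\}$ exactly once, so in $\Lambda$ one finds $uw = 1 + ts$ where $t = (lk-1)/m$. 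Combined with the elementary identity $us = ls$, the linear combination $u(lw - ts) = l(1 + ts) - lts = l$ produces $l \in u\Lambda$ and closes the argument.
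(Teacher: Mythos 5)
Your proof is correct and follows essentially the same two-stage strategy as the paper: first determine $A = I + l\Lambda$ with $\gcd(l,m)=1$ (the paper uses the identity $x = \alpha x$ for $x \in I$ while you observe that $(g-1)s = 0$ forces $g-1 \in A$; same content, your phrasing is slightly slicker), then identify this ideal with $u\Lambda$ via a computation with the auxiliary partial geometric series (the paper's $v$ with $uv = 1 - as$ plays the role of your $w$ with $uw = 1 + ts$, and the paper routes through the intermediate identity $l\Lambda + I = u\Lambda + I$ rather than exhibiting $l \in u\Lambda$ directly as you do). The differences are cosmetic rather than substantive.
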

\begin{proof}
Let  $\varepsilon \colon \Lambda \to \mathbb Z$, $\sum_g a_g g \mapsto \sum_g a_g$ be the augmentation, $I =\mathrm{Ker}(\varepsilon)$ be the augmentation ideal. Since $A + (s) = \Lambda$, there exist $\alpha \in A$ and $k \in \z$ such that $1=\alpha+k \cdot s$. For any $x \in I$, we have $x=\alpha\cdot x + k\cdot s\cdot x=\alpha \cdot x$, since $x \cdot s = \varepsilon (x) \cdot s =0$. Therefore $ I \subset A$.

Assume $\varepsilon(A)= l \cdot \z$, $l>0$. Since $\varepsilon (s)=m$ and $A  +(s) = \Lambda$,  we have $(l,m)=1$. From the commutative diagram
$$\xymatrix{
0 \ar[r] & I \ar[d]^{=} \ar[r] & A \ar[d] \ar[r]^{\varepsilon} & l \cdot \z \ar[d]\ar[r] & 0 \\
0 \ar[r] & I  \ar[r] & \Lambda  \ar[r]^{\varepsilon} & \z \ar[r] & 0 }$$
we get an isomorphism $\bar \varepsilon \colon \Lambda / A \stackrel{\cong}{\rightarrow} \z/l$. But $\Lambda /A = (s)/A\cap (s)$, therefore
 $A \cap (s) =l \cdot (s )$. From $l=l\alpha+lk\cdot s$ and $lk\cdot s \in l \cdot (s)$ we see that $l \cdot \Lambda \subset A$.  Therefore $l \cdot \Lambda + I \subset A$. On the other hand, for any $x \in A$ we have $x = x-\varepsilon(x) + \varepsilon(x) \in I + l \cdot \Lambda$. Therefore $A = l \cdot \Lambda + I$.

 Since $l$ and $m$ are coprime, there are integers $a$ and $b$ with $b > 0$, such that $am-bl=1$. Let $u=1+g+\cdots + g^{l-1}$, $v=-g(1+g^l+\cdots +g^{(b-1)l})$, then
$$u \cdot v=\frac{1-g^{l}}{1-g} \cdot (-g)\frac{1-g^{bl}}{1-g^{l}}=\frac{g^{bl+1}-g}{1-g}=1-\frac{1-g^{am}}{1-g}=1-a \cdot s$$
Notice that  $\varepsilon(u)=l$, from the equations $l=(l-u)+u \in I + u \cdot \Lambda$ and $u=(u-l)+l \in I + l \cdot \Lambda$ we see that $l \cdot \Lambda + I = u \cdot \Lambda +I$. From $uv \cdot I =(1-as)\cdot I=I$ we see that $I \subset u \cdot \Lambda$. Therefore we have $A= l \cdot \Lambda + I = u \cdot \Lambda + I = u \cdot \Lambda$.
\end{proof}

\begin{proof}[Proof of Lemma \ref{lem:s}]
We use the same idea as in the proof of \cite[section 4]{HKT94}: by a sequences of isometries of the ambient space we move the image of $S$ to a position where we can read off a Lagrangian complement.

Let $\{e_{1}, e_{2}, f_{1}, f_{2} \}$ be the standard symplectic basis of $H_-^{2}(\Lambda)$. For convenience we will not distinguish elements in $S$ and their image under the embedding.

We first consider the case when $m$ is odd, which is simpler. First of all, since $v_{1}$ is a primitive vector and $\widetilde{\mu}(v_{1})=0$, after an isometry of $H_{-}^{2}(\Lambda)$, we may assume that $v_{1}=e_{1}$, then we may assume
$$v_{2}=a_{1}\cdot e_{1} + a_{2} \cdot e_{2} + s \cdot f_{1} + b_{2} \cdot  f_{2}, \ \ \ a_{i}, b_{i} \in \Lambda$$
where the coefficient of $f_{1}$ is determined by the condition $\lambda(v_{1}, v_{2})=s$.
Note that the $a_{2} \cdot e_{2} + s \cdot f_{1} + b_{2} \cdot f_{2}$ is a primitive vector, therefore $a_{2} \cdot \Lambda + b_{2} \cdot \Lambda + s \cdot \Lambda = \Lambda$. From Lemma \ref{lem:module} we see that $a_{2} \cdot \Lambda + b_{2} \cdot \Lambda = u \cdot \Lambda$ with $u= 1+g+g^2 +\cdots +g^{l-1}$, and $u v + as =1$ for some $v \in \Lambda$,
$a\in \z$.  Let $a_{2} = u \cdot \alpha$, $b_{2}=u \cdot \beta$. Now consider $\alpha \cdot e_{2} + \beta \cdot f_{2}$, this is a primitive vector with $\widetilde{\mu}(\alpha \cdot e_{2} + \beta \cdot  f_{2}) = [\alpha \bar \beta] \in \Lambda/\langle 1, x+\bar x\rangle$. From the equation
$$0 =\lambda(v_{2}, v_{2})=u\bar u (\alpha \bar \beta - \bar \alpha \beta)$$
and the fact that $u$ is not a zero-divisior, we see that $ \alpha \bar \beta - \bar \alpha \beta=0$. This implies that $ [\alpha \bar \beta]=0 \in \Lambda/\langle 1, x+\bar x\rangle$, since $m$ is odd. On the other hand, $v \cdot e_{2} + s \cdot f_{2}$ is also a primitive vector with $\widetilde{\mu}(v \cdot e_{2} + s \cdot f_{2}) = [v \bar s] = [\varepsilon(v) s]=0 \in \Lambda/\langle 1, x+\bar x\rangle$, where $\varepsilon$ is the augmentation. Therefore there is an isometry of $\mathrm{span}_{\Lambda}(e_{2}, f_{2})$, sending $\alpha \cdot e_{2} + \beta \cdot f_{2}$ to $v \cdot e_{2} + s \cdot f_{2}$. Now we may assume
$$v_{1}= e_{1}, \ \ \ v_{2}= a_{1} \cdot e_{1} + uv \cdot e_{2} + s \cdot f_{1} + us \cdot f_{2}$$
Let $w_{1}= -a \cdot e_{2}+ f_{1}$, $w_{2}= -a \cdot e_{1} + f_{2}$. It's easy to see that the submodule $U$ generated by $w_{1}$ and $w_{2}$ has $\lambda|_{U \times U} =0$ and $\widetilde{\mu} |_{U} =0$, and the matrix
$$\left ( \begin{array}{cccc}
1 & a_{1} & 0   & -a \\
0 & uv      & -a &  0   \\
0 & s         & 1    & 0 \\
0 & us       & 0    & 1 \end{array} \right )$$
has determinant $=uv+as =1$. Therefore $U$ is a Lagrangian complement of $S$.

When $m=2m'$ is even, we replace $v_2$ by $v_1+v_2$, then we have $\lambda(v_1, v_2)=s$, $\lambda(v_i, v_i)=0$ for $i=1,2$ and $\widetilde \mu(v_1)=0$, $\widetilde \mu(v_2)= [g^{m'}]$. As before we may first assume that $v_1=e_1$ and $v_{2}=a_{1}\cdot e_{1} + a_{2} \cdot e_{2} + s \cdot f_{1} + b_{2} \cdot  f_{2}$. Now we need an additional transformation to control the coefficient of $e_1$: since $a_{2} \cdot e_{2} + s \cdot f_{1} + b_{2} \cdot  f_{2}$ is a primitive vector, there exist $r,k,t \in \Lambda$ such that $r a_2 + k s + t b_2=-a_1$. Apply the isometies $R$ and $T$ given by the matrices (indeed these are transvections by $(e_1, -rf_2)$ and $(e_2, tf_1)$ respectively)
$$
R=\left ( \begin{array}{cccc}
1 & r & 0 & 0\\
0 & 1 & 0 & 0\\
0 & 0 & 1 & 0\\
0 & 0 & -\overline r & 1
\end{array} \right ), \ \ T = \left ( \begin{array}{cccc}
1 & 0 & 0 & t\\
0 & 1 & -\overline t & 0\\
0 & 0 & 1 & 0\\
0 & 0 & 0 & 1
\end{array} \right )$$
the coefficient of $e_1$ in the vector $R \circ T(v_2)$ now equals $hs$ for some $h \in \z$. Therefore we have
$$v_1 = e_1, \ \ \ v_2=hs \cdot e_1 + a_2 \cdot e_2 + s \cdot f_1 + b_2 \cdot f_2$$
Now we proceed as before, assume $a_2= u \cdot \alpha$, $b_2 = u \cdot \beta$, with $u=1+g+ \cdots g^{l-1}$, $v=1+g^l + \cdots +g^{(b-1)l}$, $uv + as =1$, $bl+am =1$.  The primitive vector $v \cdot e_2 + s \cdot f_2$ has $$\widetilde \mu (v \cdot e_2 + s \cdot f_2) = [vs]=[\varepsilon (v) s] = [g^{m'}] \in \Lambda/\langle 1, x+\bar x \rangle$$
since $\varepsilon (v) =b$ is odd. The primitive vector $\alpha \cdot e_2 + \beta \cdot f_2$ has $\widetilde \mu (\alpha \cdot e_2 + \beta \cdot f_2) = [\alpha \bar \beta]$. Write $\alpha \bar \beta = \sum_i c_i g^i$, then by the equation $\alpha \bar \beta = \bar \alpha \beta$ (since $\lambda(v_2, v_2)=0$) we have $\widetilde \mu (\alpha \cdot e_2 + \beta \cdot f_2)=[c_{m'}g^{m'}] \in \Lambda/\langle 1, x+\bar x \rangle$. From the equation
$$[g^{m'}]= \widetilde \mu (v_2)= [hs\cdot s]+[u \bar u \alpha \bar \beta]=[hm \cdot s]+[u \bar u \alpha \bar \beta]= [u \bar u \alpha \bar \beta]$$
(since $m$ is even), and the equation
$$[u \bar u \alpha \bar \beta]=[(1+g+ \cdots +g^{(l-1)})(1+g^{-1}+ \cdots + g^{-(l-1)})\cdot \sum_{i=0}^{m-1} c_i g^i]=[lc_{m'}g^{m'}]=[c_{m'}g^{m'}]$$
(since $l$ is odd) we see that $c_{m'}$ is odd, therefore $\widetilde \mu (\alpha \cdot e_2 + \beta \cdot f_2)=[g^{m'}]=\widetilde \mu (v \cdot e_2 + s \cdot f_2)$. Now there is an isometry of $\mathrm{span}_{\Lambda}(e_{2}, f_{2})$, sending $\alpha \cdot e_{2} + \beta \cdot f_{2}$ to $v \cdot e_{2} + s \cdot f_{2}$. The rest is as before.
\end{proof}

\begin{rem}
In the decomposition $\mathrm{Ker}(\pi_3(M_0) \to \pi_3(B))/\mathrm{rad} = \z^2 \perp H_-^r(\Lambda)$ with $r=(g-1)/m$, if $r \ge 2$, Theorem 5 of \cite{Kreck99} implies that the surgery obstruction $\theta(W,\bar \nu)$ is elementary.
\end{rem}

\section{Higher dimensional case}\label{sec:high}
In this section we study free $\z/m$-action on $\sharp g(S^n \times S^n) \sharp \Sigma$ for $n\ge 4$ and prove Theorem \ref{thm:gen}. This section is organized as follows: first, for a given tuple $(p_1, \cdots, p_k) \in (\z/m)^{[n/4]}$, we construct a free $\z/m$-action on $\sharp g(S^n \times S^n) \sharp \Sigma$ with quotient manifold $M_0$, such that $p_i(M_0)=p_i$ for $i=1, \cdots, [n/4]$. This $M_0$ serves as a model manifold, i.~e., for any quotient manifold $M$ which has the same Pontrjagin classes $p_i(M)$ for $i=1,\cdots, [n/4]$, we will show that $M_0$ and $M$ are bordant in a common normal $(n-1)$-type with the given identification of $\pi_1$. Then we show that the surgery obstruction is elementary. Therefore there is a diffeomorphism between $M_0$ and $M$ inducing the given identification of $\pi_1$, hence the two actions are conjugate. To keep the notations simple we will only deal with the case of $\sharp {g}(S^{n} \times S^{n})$. The proof for the general situation is similar.
Recall that we assume the prime factors of $m$ are larger than $C(n)$. Denote $k=[n/4]$.

\subsection{Model manifolds}\label{subsec:model}
In this subsection we construct model manifolds $M_{0}$ with fundamental group $\z/m$, universal cover $\sharp {g}(S^{n} \times S^{n})$ and given Pontrjagin classes
$$(p_{1}(M_{0}), \cdots, p_{k}(M_{0})) \in (\z/m)^{k}.$$

\subsubsection{$n$ odd}
\begin{lem}\label{lem:odd}
Let $L_{m}^{n}=L_{m}^{n}(1,\cdots, 1)$ be the standard lens space, $g \colon S^{n} \to L_{m}^{n}$ be the covering map. Given $a_i \in H^{4i}(L_m^n)=\z/m$ for $i=1,\cdots , k$, there is a stable vector bundle $\xi$ over $L_m^n$ such that
\begin{enumerate}
\item $p_i(\xi)=a_i$ for $i=1, \cdots, k$;
\item $g^{*}\xi$ is trivial.
\end{enumerate}
\end{lem}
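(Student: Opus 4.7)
The plan is to construct $\xi$ as the realification $E_{\R}$ of a complex stable vector bundle $E$ over $L_m^n$ that is polynomial in the canonical complex line bundle $\eta$ (with $c_1(\eta)=x\in H^2(L_m^n;\z)=\z/m$ a generator). Since $H^2(S^n)=0$ for $n\ge 3$, the pullback $g^*\eta$ is a trivial complex line bundle, so $g^*E$ is stably trivial in $\wtd K(S^n)$ and therefore $g^*\xi=(g^*E)_{\R}$ is stably trivial in $\wtd{KO}(S^n)$. Condition (2) is thus automatic for any such construction. To arrange condition (1), note that the identity $\sum_i (-1)^i p_i(\xi)=c(E)\cdot c(\overline E)$ expresses $p_i(\xi)$ as $2(-1)^i c_{2i}(E)$ plus a polynomial in $c_1(E),\ldots,c_{2i-1}(E)$. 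Since $2$ is a unit in $\z/m$ (as the primes of $m$ exceed $C(n)\ge 3$) and all $H^{2j}(L_m^n)=\z/m$ for $j\le 2k$ (since $4k\le n-1$ as $n$ is odd), one can solve inductively for target Chern classes $c_j(E)\in\z/m$ realizing the prescribed $p_i=a_i$.

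It then remains to realize any prescribed tuple $(\beta_1,\ldots,\beta_{2k})\in(\z/m)^{2k}$ as $(c_1(E)/x,\ldots,c_{2k}(E)/x^{2k})$. Set $y=\eta-1\in\wtd K(L_m^n)$. Since $\mathrm{ch}(y^j)=(e^x-1)^j = x^j+O(x^{j+1})$, the Newton identities give $c_i(y^j)=0$ for $i<j$ and $c_j(y^j)=(-1)^{j-1}(j-1)!\,x^j$. The hypothesis that the prime factors of $m$ exceed $2[n/4]-1=2k-1$ is precisely what makes $(j-1)!$ and every positive integer up to $2k-1$ (as required by Newton's identities) a unit in $\z/m$ for $j\le 2k$. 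Writing $E=\sum_{j=1}^{2k}\alpha_j[y^j]$ with $\alpha_j\in\z$, the Whitney formula $c(E)=\prod_j c(y^j)^{\alpha_j}$, expanded modulo $x^{j+1}$, yields
$$c_j(E)\equiv\bigl((-1)^{j-1}(j-1)!\,\alpha_j+P_j(\alpha_1,\ldots,\alpha_{j-1})\bigr)\,x^j,$$
where $P_j$ is a universal polynomial. Since the coefficient of $\alpha_j$ is a unit in $\z/m$, we can inductively solve for $\alpha_j\in\z/m$ (and lift to $\z$) to hit the prescribed $\beta_j$.

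The technical core of the argument is the identification $c_j(y^j)=(-1)^{j-1}(j-1)!\,x^j$ together with the vanishing of the lower Chern classes of $y^j$; this rests on the Newton identities relating elementary symmetric and power sum polynomials. The arithmetic hypothesis on the prime factors of $m$ enters exactly where needed: to invert $2$ in the Pontrjagin-to-Chern passage, and to invert $(j-1)!$ and the integer $j$ in the inductive construction of $E$ from the virtual bundles $y^j$. Everything else is bookkeeping.
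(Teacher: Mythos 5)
Your proof is correct, and it takes a genuinely different route from the paper's. The paper runs obstruction theory directly on the fibration $\mathrm{Pon}\colon BO \to \prod_{i\le k}K(\z,4i)$: it shows the obstruction groups $H^{i}(L_m^n;\pi_{i-1}(F))$ vanish (except possibly in the top dimension, which it sidesteps by first lifting over $L^{(n-1)}$ and then extending via the isomorphism $\widetilde{KO}(L_m^n)\to\widetilde{KO}(L^{(n-1)})$), and it handles the triviality of $g^*\xi$ by a separate correction on the top cell when $n\equiv 1\pmod 8$. You instead build $\xi$ explicitly as the realification of an integer combination $E=\sum_{j=1}^{2k}\alpha_j\,y^j$ with $y=\eta-1$, using the computation $c_j(y^j)=(-1)^{j-1}(j-1)!\,x^j$ (verified over $\cp^\infty$, where cohomology is torsion-free, and pulled back via $L_m^n\to B\z/m\to\cp^\infty$), plus inversion of $2$ and of $(j-1)!$ mod $m$ to solve the resulting triangular system. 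One notable advantage of your route is that condition (2) becomes automatic: $\eta$ pulls back trivially to $S^n$ because $H^2(S^n)=0$, so every $y^j$ and hence $E$ pulls back to zero, with no case analysis needed. The arithmetic hypothesis $p>C(n)$ enters both arguments but in different guises: the paper uses it to annihilate the relevant obstruction groups, while you use it to invert $2$ (for passing from Pontrjagin to Chern classes via $p_i(\xi)=2(-1)^ic_{2i}(E)+\text{lower}$) and to invert the factorials $(j-1)!$ appearing in the leading coefficient of $c_j(y^j)$; the bound $p\ge 2[n/4]-1$ built into $C(n)$ is precisely what makes $(2k-1)!$ a unit in $\z/m$.
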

\begin{proof}
Let $\mathrm{Pon} \colon BO \to K(\z, 4) \times \cdots \times K(\z, 4k)$ be the map given by the universal Pontrjagin class $p_1, \cdots p_k$,  then the image of the induced map $\pi_{4i}(BO)=\z \to \pi_{4i}(K(\z, 4i))=\z$ has index $a_i \cdot (2i-1)!$, where $a_{i}=1$ when $i$ is even and $a_{i}=2$ when $i$ is odd (\cite{Kervaire}). Denote the homotopy fiber by $F$.

For $a=(a_{1}, \cdots , a_{k})$,
let $h_{a} \colon L_{m}^{n} \to K(\z, 4) \times \cdots \times K(\z, 4k)$ be the classifying map of $a$, consider the following lifting problem
$$\xymatrix{
& BO \ar[d]^{\mathrm{Pon}} \\
L_{m}^{n} \ar[ur]^{\overline h_{a}} \ar[r]^{h_{a}\ \ \ \ \ \ \ \ \ \ \ \ } &  K(\z, 4) \times \cdots \times K(\z, 4k)}$$
The obstructions to a lifting $\overline{h}_{a}$ lie in $H^i(L_{m}^{n};\pi_{i-1}(F))$. When the prime factors of $m$ are larger than $C(n)$, all the obstruction groups vanish except for the case $n \equiv 3 \pmod 8$, where the only non-trivial obstruction group is $H^{n}(L_{m}^{n};\pi_{n-1}(F)) \cong \z/2$.  There is a lift $\overline h_{a}$ on the $(n-1)$-skeleton $L^{(n-1)}$.   It's easy to see from the Atiyah-Hirzebruch spectral sequence that $\widetilde{KO}(L_m^n) \to \widetilde{KO}(L^{(n-1)})$ is an isomorphism. Therefore we have a vector bundle $\xi$ over $L_{m}^{n}$ whose restriction to $L^{(n-1)}$ is the vector bundle given by $\overline h_a$. Since $H^{4i}(L_{m}^{n}) \to H^{4i}(L^{(n-1)})$ is an isomorphism for $i=1, \cdots, k$, we have $p_{i}(\xi)=a_{i}$ for $i=1, \cdots, k$.

When $n \not \equiv 1 \pmod 8$,  $g^{*}\xi$ is trivial since $\pi_{n}BO=0$. When $n \equiv 1 \pmod 8$, there is a split surjective map $g^{*} \colon \widetilde{KO}(L_{m}^{n}) \to \widetilde{KO}(S^{n})$, therefore by changing the vector bundle on the top cell of $L_{m}^{n}$ we get a vector bundle $\xi$ such that  $g^{*}\xi$ is trivial.
\end{proof}

Now given $(p_{1}, \cdots, p_{k}) \in (\z/m)^{k}$, let $\nu L_{m}^{n}$ be the stable normal bundle of $L_{m}^{n}$, write 
$$p(\nu L_{m}^{n})\cdot(1+p_{1}+\cdots +p_{k}) = 1+a_{1}+\cdots +a_{k}+ \textrm{higher terms} $$
By Lemma \ref{lem:odd}, there is an $(n+1)$-dimensional vector bundle $\xi$ over $L_{m}^{n}$ with $p_{i}(\xi)=a_{i}$ for $i=1, \cdots , k$. Let $\pi \colon S(\xi) \to L_{m}^{n}$ be the sphere bundle, then $\pi^{*} \colon H^{i}(L_{m}^{n}) \to H^{i}(S(\xi))$ is an isomorphism for $i\le n-1$, and the total Pontrjagin class of $S(\xi)$ is 
$$p(S(\xi))=\pi^{*}(p(L_{m}^{n}) \cdot p(\xi))=1+p_{1}+\cdots +p_{k}+\textrm{higher terms}$$
The universal cover of $S(\xi)$ is $S^{n} \times S^{n}$.  Let $M_{0}=S(\xi) \sharp \frac{g-1}{m}(S^{n} \times S^{n})$, then its universal cover is $\sharp {g}(S^{n} \times S^{n})$ and $p_{i}(M_{0})=p_{i}(S(\xi))=p_{i}$ for $i=1, \cdots, k$.

\subsubsection{$n$ even}
\begin{lem}\label{lem:even}
Let $L_{m}^{n+1}=L_{m}^{n+1}(1,\cdots, 1)$ be the standard lens space, $g \colon S^{n+1} \to L_m^{n+1}$ be the covering map. Given $a_i \in H^{4i}(L_m^{n+1})=\z/m$ for $i=1,\cdots, k$, there is an $n$-dimensional vector bundle $\xi$ over $L_m^{n+1}$ satisfying the following conditions
 \begin{enumerate}
 \item $p_i(\xi)=a_i$ for $i=1, \cdots, k$;
  \item $g^*(\xi)$ is trivial;
 \item the Euler class $e(\xi)=0$ when $n=4k$.
\end{enumerate}
\end{lem}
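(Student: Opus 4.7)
The strategy parallels Lemma \ref{lem:odd}, with two additional features to handle: the bundle must have exact rank $n$ rather than merely being stable, and when $n = 4k$ we must additionally arrange $e(\xi) = 0$.

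First, I would construct a \emph{stable} bundle $\eta$ over $L_m^{n+1}$ with $p_i(\eta) = a_i$ by repeating the obstruction-theoretic argument of Lemma \ref{lem:odd} verbatim, this time applied to $\mathrm{Pon} \colon BO \to K(\z,4)\times\cdots\times K(\z,4k)$ over $L_m^{n+1}$. The hypothesis on the prime factors of $m$ (in particular the clause $p \ge 2[n/4]-1$ in the definition of $C(n)$) kills the intermediate obstructions, and any leftover top-cell obstruction is absorbed by the fact that $\widetilde{KO}(L_m^{n+1}) \to \widetilde{KO}(L^{(n)})$ is essentially an isomorphism, as before.

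Next I would realize $\eta$ as a rank-$n$ bundle. By general position over the $(n+1)$-complex $L_m^{n+1}$, the stable class $\eta$ is represented by a rank-$(n+1)$ bundle $\eta'$. Since $n$ is even, $\eta'$ has odd rank and so $2\, e(\eta') = 0$; but $H^{n+1}(L_m^{n+1};\z) = \z$ is torsion-free, hence $e(\eta') = 0$, so $\eta'$ admits a nowhere-zero section and splits as $\eta' \cong \xi \oplus \underline{\mathbb R}$ for some rank-$n$ bundle $\xi$. This yields $(1)$ automatically. For $(3)$, when $n = 4k$ the Euler class $e(\xi) \in H^n(L_m^{n+1}) = \z/m$ depends on the chosen reduction of $\eta'$. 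The set of such reductions is a torsor for the sections of the sphere bundle $S(\eta')$, and changing the chosen section alters $e(\xi)$ by the image of an obstruction in $H^n(L_m^{n+1};\z)$ that surjects onto $\z/m$; hence a section can be chosen so that $e(\xi) = 0$, while the stable class of $\xi$, and therefore its Pontrjagin classes, remain unchanged.

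Finally, for $(2)$, I would modify $\xi$ on the top $(n+1)$-cell in the manner of Lemma \ref{lem:odd}, using the split surjection $g^* \colon \widetilde{KO}(L_m^{n+1}) \to \widetilde{KO}(S^{n+1})$ (when nontrivial) to cancel any nontriviality of $g^*\xi$. Because this correction takes place in dimension $n+1 > n$, it touches neither the $p_i$ ($i \le k$) nor $e(\xi) \in H^n$. The main obstacle of the plan is the Euler-class step in Paragraph~3: verifying that the transition between distinct rank-$n$ reductions of $\eta'$ genuinely surjects onto $H^n(L_m^{n+1};\z) = \z/m$ is what makes it possible to control $(p_1,\dots,p_k,\,e(\xi),\,g^*\xi)$ simultaneously.
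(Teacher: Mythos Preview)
Your approach is viable and, for $n\ne 4k$, coincides with the paper's. For $n=4k$ it is genuinely different: rather than reducing from rank $n+1$ and then adjusting the section, the paper lifts $h_a$ directly to $BO(n-1)$ over the $n$-skeleton $L^{(n)}$ (using Kervaire's computations of $\pi_{n-1}BO(n-1)$ and $\pi_n BO(n-1)$ to check that the obstruction group $H^n(L_m^{n+1};\pi_{n-1}F)$ vanishes), sets $\xi=\xi'\oplus\varepsilon^1$, and then extends over the top cell. In that scheme $e(\xi)=0$ is automatic because $\xi$ visibly has a nowhere-zero section, so no Euler-class adjustment is needed; the price is the unstable obstruction analysis with $BO(n-1)$.

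The obstacle you flag does have a clean resolution, but it needs two facts you did not supply. First, the fibre inclusion $S^n\hookrightarrow BSO(n)$ over $BSO(n+1)$ classifies $TS^n$, so changing the section by a primary difference $d\in H^n(L_m^{n+1};\z)=\z/m$ alters $e(\xi)$ by $2d$, not by $d$; surjectivity onto $\z/m$ therefore uses that $m$ is odd (all its prime factors exceed $C(n)\ge 3$). Second, every such $d$ is realized by a globally defined section: since $n$ is even the cellular coboundary $\delta^n\colon C^n(L_m^{n+1})\to C^{n+1}(L_m^{n+1})$ vanishes, so any modification of the section over $L^{(n)}$ extends over the top cell. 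One further correction: for condition~(2) you need $g^*\xi$ trivial as a rank-$n$ bundle (so that the sphere bundle upstairs is $S^{n-1}\times S^{n+1}$), not merely stably; the relevant group for the top-cell modification is $\pi_{n+1}BO(n)$ rather than $\widetilde{KO}(S^{n+1})$, and the paper observes (citing Kervaire) that this is a $2$-group, whence $m$ acts invertibly and your argument goes through.
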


\begin{proof}
When $n \ne 4k$, by Lemma \ref{lem:odd} there is an $(n+1)$-dimensional vector bundle $\xi'$ over $L_{m}^{n+1}$ with $p_{i}(\xi')=a_{i}$ for $i=1, \cdots ,k$. The Euler class of $\xi'$ is trivial since $n+1$ is odd, therefore $\xi'$ splits off a trivial line bundle and we get an $n$-dimensional vector bundle $\xi$ over $L_{m}^{n+1}$ with the given Pontrjagin classes.

Now consider the case $n=4k$.
Let $\mathrm{Pon} \colon BO(n-1) \to K(\z,4) \times \cdots \times K(\z,4k)=K$ be the map given by the universal Pontrjagin classes $p_1, \cdots, p_k$, denote the homotopy fiber by $F$. For
$a=(a_{1}, \cdots , a_{k})$
let $h_{a} \colon L_{m}^{n+1} \to K(\z, 4) \times \cdots \times K(\z, 4k)$ be the classifying map of $a$, and consider the lifting problem
$$\xymatrix{
& BO(n-1) \ar[d]^{\mathrm{Pon}} \\
L_{m}^{n+1} \ar[ur]^{\overline h_{a}} \ar[r]^{h_{a}\ \ \ \ \ \ \ \ \ \ \ \ } &  K(\z, 4) \times \cdots \times K(\z, 4k)}$$
The obstruction groups $H^{i}(L_{m}^{n+1};\pi_{i-1}(F))=0$ for $i \le n-1$.  There is an exact sequence
$$0 \to \pi_{n}(F) \to \pi_{n}BO(n-1) \to \pi_{n}(K) \to \pi_{n-1}(F) \to \pi_{n-1}BO(n-1) \to 0.$$
For $n=4k$ we have $\pi_{n-1}BO(n-1)=0$ when $n=4, 8$, and $\pi_{n-1}BO(n-1)=\z/2$ when $n \ne 4, 8$. The stabilization map $\pi_{n}BO(n-1) \to \pi_{n}BO$ is surjective when $n\ne 4,8$, and has an image of index $2$ when $n=4,8$ (c.~f.~\cite{Kerv59}). Therefore the obstruction group $H^{n}(L_{m}^{n+1};\pi_{n-1}F)=0$ and we have a lifting $\overline h_{a}$ on the $n$-skeleton $L^{(n)}$. Denote the corresponding vector bundle by $\xi'$ and let $\xi = \xi' \oplus \varepsilon^{1}$, then $e(\xi)=0$. Let $f \colon S^{n} \to L^{(n)}$ be attaching map of the top cell, then $f^*\xi$ is trivial, since a $4k$-dimensional vector bundle over $S^{4k}$ is trivial if and only if both $p_{k}$ and $e$ are $0$.  Hence $\xi$ extends to a vector bundle over $L_{m}^{n+1}$ such that $p_{i}(\xi)=a_{i}$ for $i=1, \cdots, k$. Since $H^n(L_m^{n+1}) \to H^n(L^{(n)})$ is an isomorphism and $e(\xi|L^{(n)})=0$ by construction, we see that $e(\xi)=0$.

Let $g \colon S^{n+1} \to L_{m}^{n+1}$ be the covering map, consider the vector bundle $g^{*}\xi$.  We may change the vector bundle $\xi$ on the top cell by $\alpha \in \pi_{n+1}BO(n)$, this will not change the Pontrjagin classes and the Euler class, since the restriction of the bundle to $L^{(n-1)}$ is not changed.  The corresponding change for $g^{*}\xi$ is $m\cdot \alpha$. Since $m$ is invertible in the $2$-group $\pi_{n+1}BO(n)$ (c.~f.~\cite{Kerv59}), we see that for a certain choice of $\xi$ the pull-back bundle $g^{*}\xi$ is trivial.
\end{proof}

Let $\pi \colon S(\xi) \to L_{m}^{n+1}$ be the sphere bundle. Since $g^*\xi$ is trivial, the universal cover of $S(\xi)$ is $S^{n-1} \times S^{n+1}$. From the Leray-Serre spectral sequence of the fiber bundle $S^{n-1} \to S(\xi) \to L_m^{n+1}$ it's easy to see that $\pi^* \colon H^{4i}(L_m^{n+1}) \to H^{4i}(S(\xi))$ is an isomorphism for $i=1, \cdots, k$. (For the case $n=4k$ we need the fact that $e(\xi)=0$.) Then as in the $n$ odd case we may realize any given $p_{i}=p_{i}(S(\xi))$ ($i=1, \cdots, k$) by choosing appropriate $\xi$.
Let $S^{n-1} \times D^{n+1} \subset S(\xi)$ be the tubular neighborhood of a fiber $S^{n-1}$. Do surgery as in the proof of Theorem \ref{thm:existence},  the result manifold $N$ has fundamental $\z/m$. The universal cover of $N$ is the result of equivariant surgeries on $\cup_{i=1}^m S^{n-1} \times D^{n+1} \subset S^{n-1} \times S^{n+1}$, hence is $\sharp_{m-1}(S^{n} \times S^{n})$.  Let $W$ be the trace of the surgery, then there are homotopy equivalences
$$S(\xi) \cup e^{n} \simeq W \simeq N \cup e^{n+1}$$
and isomorphisms $H^{4i}(S(\xi)) \cong H^{4i}(W) \cong H^{4i}(N)$ for $i = 1, \cdots, k$, (except for the case $n=4k$, where $H^{n}(W) \to H^{n}(N)$ is injective.) 
The Pontrjagin classes of $W$ restrict to the Pontrjagin classes of $S(\xi)$ and $N$ respectively. Therefore $p_i(N)=p_i(S(\xi))=p_{i}$ under the identification of the cohomology groups with $\z/m$. Let $M_0=N \sharp \frac{g+1-m}{m}(S^n \times S^n)$, then its universal cover is $\sharp g(S^n \times S^n)$ and $p_i(M_0)=p_i(N)=p_{i}$ for $i=1, \cdots, k$.

\subsection{The normal $(n-1)$-type}
In this subsection we determine the normal $(n-1)$-type of the quotient manifold $M$ of a free $\z/m$-action. Identify $H^{4i}(M)$ with $H^{4i}(K(\z/m,1))$, let
$$\mathrm{Pon}_{M} \colon K(\z/m,1) \to K(\z, 4) \times \cdots \times K(\z,4k)$$
be the map induced by $p_1(M), \cdots ,  p_k(M)$, then there is a lifting $\gamma$
$$\xymatrix{
& BSO \ar[d]^{\mathrm{Pon}} \\
K(\z/m,1) \ar[ur]^{\gamma} \ar[r]^{\mathrm{Pon}_{M} \ \ \ \ \ \ \ \ } & K(\z, 4) \times \cdots \times K(\z,4k)}$$
since all the obstruction groups are $0$. Denote the corresponding vector bundle over $K(\z/m,1)$ also by $\gamma$.

Let $p_0 \colon BO\langle n+1 \rangle \to BO$ be the $n$-connected cover of $BO$, $f \colon M \to K(\z/m,1)$ be the classifying map of $\pi_1(M)=\z/m$. By the construction of $\gamma$ we have $p_i(f^*\gamma \oplus \nu M)=0$ for $i=1, \cdots , k$.

\begin{lem}\label{lem:type}
The stable vector bundle $f^*\gamma \oplus \nu M$ has an $O\langle n+1 \rangle$-structure.
\end{lem}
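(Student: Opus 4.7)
The plan is to lift the classifying map $\eta := f^*\gamma \oplus \nu M \colon M \to BO$ through the Whitehead tower
\[
BO \leftarrow BO\langle 2\rangle \leftarrow \cdots \leftarrow BO\langle n+1\rangle,
\]
where each stage $BO\langle i+1\rangle \to BO\langle i\rangle$ is a principal $K(\pi_i BO,\, i-1)$-fibration and the obstruction to lifting is a class $\theta_i(\eta) \in H^i(M;\, \pi_i BO)$. I will show every such obstruction vanishes for $1 \le i \le n$, by combining the triviality of $\eta$ on the universal cover with a sparsity argument for $H^*(M)$ forced by the prime-factor assumption on $m$.

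The first step is the observation that $\eta|_{\tilde M}$ is stably trivial: since $\tilde M = \sharp g(S^n \times S^n) \sharp \Sigma$ is stably parallelizable, $\nu M$ restricts trivially; and the composite $\tilde M \to M \to K(\z/m,1)$ is null-homotopic (as $\tilde M$ is simply connected), so $f^*\gamma|_{\tilde M}$ is also trivial. Consequently every $\theta_i(\eta)$ restricts to zero in $H^i(\tilde M;\, \pi_i BO)$. From the Leray--Serre spectral sequence of $\tilde M \to M \to K(\z/m,1)$, the $(n-1)$-connectivity of $\tilde M$ confines the kernel of $H^i(M; A) \to H^i(\tilde M; A)$, for $i \le n$, to the edge group $H^i(\z/m; A)$. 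Since the prime factors of $m$ exceed $C(n)\ge 3$, in particular $m$ is odd, so this kernel vanishes for $A=\z/2$ and is a subgroup of $\z/m$ for $A=\z$ in positive even degree.

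The orientation case $i=1$ is automatic, since $M$ is orientable and $\gamma$ is chosen to lift to $BSO$. For the remaining mod-$2$ obstructions, namely those with $\pi_i BO = \z/2$ (the degrees $i \equiv 1, 2 \pmod 8$), the restriction $H^i(M;\z/2) \to H^i(\tilde M;\z/2)$ is injective, so $\theta_i(\eta)=0$ follows at once. The main technical point of the proof concerns the integral obstructions at $i = 4j$. Here the key input, to be verified carefully, is that in $H^{4j}(BO\langle 4j\rangle;\z)\cong\z$ the Pontrjagin class equals $a_j(2j-1)!$ times the generating $k$-invariant $\theta_{4j}$, with $a_j\in\{1,2\}$ as recalled in the proof of Lemma~\ref{lem:odd}. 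Evaluating on $\eta$ and using the hypothesis $p_j(\eta)=0$ yields $a_j(2j-1)! \cdot \theta_{4j}(\eta) = 0$. Combined with the inclusion $\theta_{4j}(\eta)\in\ker(\mathrm{restr})\subseteq\z/m$ and the coprimality $\gcd\!\bigl(a_j(2j-1)!,\, m\bigr)=1$ for $j \le [n/4]$ (guaranteed by $C(n)\ge 2[n/4]-1$ together with $m$ odd), one concludes $\theta_{4j}(\eta)=0$. This is precisely the place where the hypothesis on the prime factors of $m$ enters the argument.
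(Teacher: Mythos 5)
Your Whitehead-tower obstruction-theory argument is a legitimate alternative to the paper's proof, which instead works with the $n$-stage Postnikov tower $X_n$ of $BSO$ and checks that $\varphi\circ g\in[M,X_n]$ vanishes prime by prime, using that $[M,X_n]_{(p)}\to[\tilde M,X_n]_{(p)}$ is injective when $p\nmid m$, and that for $p\mid m$ (so $p>C(n)$) the localization $BSO_{(p)}$ has $n$-stage Postnikov tower $\prod_{j\le k}K(\z_{(p)},4j)$ detected by the Pontrjagin classes. Both proofs rest on the same three inputs: stable triviality of $(f^*\gamma\oplus\nu M)|_{\tilde M}$; the hypothesis on the prime factors of $m$ (giving $m$ odd and coprime to every $a_j(2j-1)!$ with $j\le[n/4]$); and the fact that $p_j$ generates an index-$a_j(2j-1)!$ subgroup of $H^{4j}(BO\langle 4j\rangle;\z)\cong\z$. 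The paper's route avoids any discussion of lift indeterminacy by working with mapping groups; yours is more elementary but glosses over exactly that point.

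Specifically, the sentence ``Consequently every $\theta_i(\eta)$ restricts to zero in $H^i(\tilde M;\pi_i BO)$'' is not immediate: a higher obstruction depends on the lift chosen at the previous stage, and the restriction to $\tilde M$ of the lift you fix over $M$ need not a priori be the nullhomotopic lift that $\eta|_{\tilde M}$ admits. The gap closes, but deserves a sentence. For $i<n$ the target $H^i(\tilde M;\pi_i BO)$ vanishes outright because $\tilde M$ is $(n-1)$-connected, so there is nothing to check. For $i=n$, observe that all the indeterminacy groups $H^{j-1}(\tilde M;\pi_j BO)$ with $j\le n$ vanish by the same connectivity, so the lift of $\eta|_{\tilde M}$ to $BO\langle n\rangle$ is \emph{unique} up to homotopy; since $\eta|_{\tilde M}$ lifts all the way to $BO\langle n+1\rangle$, that unique lift has vanishing obstruction, and therefore so does the restriction of your chosen lift on $M$. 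Alternatively, in the integral case $n=4[n/4]$ one can simply note that $\theta_n(\eta)|_{\tilde M}$ is annihilated by the nonzero integer $a_{[n/4]}(2[n/4]-1)!$ inside the torsion-free group $H^n(\tilde M;\z)$, hence is zero. With this made explicit, the proof is correct and gives the same conclusion as the paper's by a genuinely different, somewhat more hands-on route.
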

\begin{proof}
Let $\varphi \colon BSO \to X_{n}$ be the $n$-stage of the Postnikov tower of $BSO$, then there is a fibration
$$BO\langle n+1 \rangle \to BSO \stackrel{\varphi}{\rightarrow} X_{n}.$$
Let $g \colon M \to BSO$ be the classifying map of $f^*\gamma \oplus \nu M$, then $g$ has a lifting to $BO\langle n+1 \rangle$ if and only if $\varphi \circ g$ is null-homotopic. Note that $X_n$ is an infinite loop space, we show that  $\varphi \circ g$ is null-homotopic by looking at its image in the the localization $[M, X_n]_{(p)}$ at every prime $p$. If $p$ is not a prime factor of $m$,  the localization at $p$, $[M, X_{n}]_{(p)} \to [\widetilde M, X_{n}]_{(p)}$  is injective. But the pull-back bundle of $f^{*}\gamma \oplus \nu M$ to $\widetilde M$ is the stable normal bundle of $\sharp g(S^{n} \times S^{n})$, hence has an $O\langle n+1 \rangle $-structure.  If $p$ is a prime factor of $m$, then $p > C(n)$, the $n$-stage Postnikov tower of $BSO_{(p)}$ is
$$\mathrm{Pon} \colon BSO_{(p)} \to K(\z_{(p)},4) \times \cdots \times K(\z_{(p)}, 4k).$$
Since $p_i(f^*\gamma \oplus \nu M)=0$ for $i=1, \cdots, k$, we see that $\varphi \circ g$ is null-homotopic in $[M,  K(\z_{(p)},4) \times \cdots \times K(\z_{(p)}, 4k)]$.
\end{proof}
Let $p \colon B = K(\z/m,1) \times BO\langle n+1 \rangle \stackrel{(-\gamma) \times p_0}{\longrightarrow} BSO \times BSO \stackrel{\oplus}{\longrightarrow} BSO$, $\overline g \colon M \to BO\langle n+1 \rangle$ be an $O\langle n+1 \rangle$-structure of $f^{*}\gamma \oplus \nu M$,
then from the construction we have a commutative diagram
$$\xymatrix{
& K(\z/m,1) \times BO\langle n+1 \rangle \ar[d]^{p} \\
M \ar[ur]^{f \times \overline g} \ar[r]^{\nu} & BSO}$$
i.~e.~$\bar \nu =f \times \overline g$ is a lifting of the normal Gauss map $\nu$. It's easy to see that  $\bar \nu$  is $n$-connected and $p$ is $n$-co-connected. Therefore the normal $(n-1)$-type of $M$ is $(B, p)$.

\subsection{Surgery obstruction}\label{subsec:obeven}
From the previous subsection we see that $M$ has the same normal $(n-1)$-type with some model manifold $M_{0}$ if they have the same Pontrjagin classes $p_{i}$ for $i \le k$. In the next subsection we will show that they are bordant in this normal $(n-1)$-type. Parallel to \S \ref{sec:ob}, in this subsection we show that there is a bordism $(W, \bar \nu)$ such that the surgery obstruction $\theta(W, \bar \nu)$ is elementary.

\subsubsection{$n$ odd}
The surgery obstruction is in $l_{4q+3}(\z/m,+)$, or in $l_{15}^{\sim}(\z/m,+)$ when $n=7$ (in this case it's easy to check $\langle w_{8}(B), \pi_{8}(B) \rangle \ne 0$). The $l$-monoid is $4$-periodic, therefore the second case is treated  in Section \ref{sec:ob}. We consider the first case, in which the quadratic refinement $\mu$ takes value in $\Lambda/\langle x+ \bar x \rangle$.  We need a version of Lemma \ref{lem:s}. The proof there goes through, the only difference is now $\mu(\alpha e_{2} + \beta f_{2})=[\alpha \bar \beta] \in \Lambda/\langle x + \bar x \rangle$ can be either $0$ or $[1]$, and $\mu (v e_{2} + s f_{2})=[v \bar s]= [\varepsilon_{2}(v)] \in  \Lambda/\langle x + \bar x \rangle$, where $\varepsilon_{2}$ is the mod $2$ augmentation. Recall from the proof of Lemma \ref{lem:module} that $u=1+g+g^{2}+\cdots +g^{l-1}$ for some $l \in \mathbb N$ coprime to $m$. Therefore there exist $a, b \in \mathbb Z$ such that $bl + am =1$. Then $v=1+g^{l }+ \cdots + g^{(b-1)l}$ satisfies $uv + as =1$, and $\varepsilon (v) =b$. Now since $m$ is odd, we may take $b'=b+m$, which has opposite parity with $b$. Therefore we may always choose an appropriate $v$ such that $\mu(v e_{2} + s f_{2})=\mu(\alpha e_{2} + \beta f_{2})$. The rest goes as in the proof of Lemma \ref{lem:s}.

\subsubsection{$n$ even}
The analysis of the surgery obstruction is similar to the $n$ odd case. First of all in this case we still have $L_{2n+1}(\z/m) = L_{2n+1}^{s}(\z/m)=0$ (\cite[p.227]{HT99}).
Recall from the construction of the model manifolds $M_0$ in the end of \S \ref{subsec:model}, we have $$\mathrm{Ker}(\pi_{n}(M_0) \to \pi_n(B))/\mathrm{rad}=\pi_n(M_0) = \pi_n(N)  \perp H_{+}^{r}(\Lambda)$$
and $\pi_n(N) \cong I^2$, where $I \subset \Lambda$ is the augmentation ideal. The quadratic form $(\pi_n(N), \lambda, \mu)$ is as follows: let $x, y$ be generators of $\pi_n(N)$, where $x$ is represented by the complementary sphere of the surgery, then we have
\begin{equation*}
\mu(x)=0, \ \ \mu(y)=\zeta \in \Lambda/\langle a -                     \bar a \rangle \textrm{ for some $\zeta$}
\end{equation*}
\begin{equation}\label{eqn:quad}
\lambda(x,x)=0, \ \ \lambda(x,y)=1-g, \ \ \lambda(y,y)=\zeta + \bar \zeta
\end{equation}

\begin{lem}
$\varepsilon(\lambda(y,y))=0$
\end{lem}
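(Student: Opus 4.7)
The plan is to interpret $\varepsilon(\lambda(y,y))$ geometrically as the ordinary integer self-intersection of the pushforward of $y$ in $H_n(N;\z)$, and then to show that this pushforward is torsion, which immediately forces the self-intersection to vanish.

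Let $\pi \colon \widetilde{N} \to N$ denote the universal covering. The first step will be to establish the general identity
\[
\varepsilon(\lambda(a,b)) \;=\; \pi_{*}(a) \cdot \pi_{*}(b) \in \z
\]
for classes $a,b \in H_n(\widetilde{N};\z) = \pi_{n}(N)$, where the right-hand side is the ordinary integer intersection number. This follows from the definition $\lambda(a,b) = \sum_{g \in \z/m}(a \cdot gb)\,g$ together with the transfer identity $\pi^{*}\pi_{*}(b) = \sum_{g} gb$ in $H_n(\widetilde{N};\z)$ and the projection formula for the finite cover $\pi$.

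The second step will be to observe that $\pi_{*}(y)$ is torsion. Since $y$ lies in the summand $I \subset \pi_{n}(N) = I^{2}$, and $I$ is isomorphic as a $\Lambda$-module to $\Lambda/(s)$ where $s = 1+g+\cdots+g^{m-1}$ is the norm element, the element $y$ is annihilated by $s$. But $s\cdot y = \pi^{*}\pi_{*}(y)$, so $\pi^{*}\pi_{*}(y) = 0$ in $H_n(\widetilde{N};\z)$. Applying $\pi_{*}$ and using $\pi_{*}\pi^{*} = m$ on $H_n(N;\z)$ yields $m\cdot\pi_{*}(y) = 0$, so $\pi_{*}(y)$ is $m$-torsion in $H_n(N;\z)$.

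The conclusion is then immediate: the integer intersection pairing on $H_n(N;\z)$ vanishes on any torsion element, because its target $\z$ is torsion-free. Hence $\pi_{*}(y)\cdot\pi_{*}(y) = 0$, which by the identity of the first step gives $\varepsilon(\lambda(y,y))=0$. The only step that requires genuine checking is the projection/intersection identity; everything else is formal algebra on $\Lambda$-modules and the elementary fact that the norm element annihilates the augmentation ideal. Consequently the main work lies in presenting the first step cleanly rather than in overcoming any real obstacle.
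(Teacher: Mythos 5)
Your proof is correct, but it runs along a genuinely different track from the paper's. You interpret $\varepsilon(\lambda(y,y))$ geometrically via the transfer and the projection formula, reduce the question to showing $\pi_*(y)$ is torsion in $H_n(N;\z)$, and then invoke the elementary fact that the ordinary integer intersection pairing kills torsion. The paper instead argues purely algebraically: it observes that $\lambda(-,y)$ restricted to the copy of $I$ containing $y$ is a $\Lambda$-module homomorphism $\psi\colon I \to \Lambda$, and that \emph{any} such $\psi$ satisfies $\varepsilon(\psi(1-g))=0$, because $s\cdot(1-g)=0$ forces $s\cdot\psi(1-g)=0$, hence $m\cdot\varepsilon(\psi(1-g))=0$. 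Taking $\psi(1-g)=\lambda(y,y)$ gives the claim. Both arguments hinge on the same algebraic fact -- the norm element $s$ annihilates the augmentation ideal $I$ -- but they exploit it at different points: you use $s\cdot y=0$ to make $\pi_*(y)$ torsion, whereas the paper uses $s\cdot\psi(1-g)=0$ directly inside $\Lambda$. The paper's route is shorter and entirely module-theoretic, requiring no recourse to the topology of $N$; yours is more illuminating geometrically and in fact proves the stronger statement that the whole image of $\pi_n(N)\cong I^2$ in $H_n(N;\z)$ is torsion, so the pushed-down intersection form vanishes identically. One small caution in your write-up: $\pi^*$ on homology of a finite cover should be read as the transfer map, and the projection-formula identity $\sum_{g}(a\cdot gb)=\pi_*(a)\cdot\pi_*(b)$ deserves an explicit reference or a line of justification, as it is the one step that is not pure algebra.
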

\begin{proof}
The augmented ideal $I$ is generated by $1-g$. For any $\Lambda$-mod homomorphism $\psi \colon I \to \Lambda$, assume $\psi(1-g)=a$. Then $\varepsilon (\psi(s \cdot (1-g)))=s \cdot \varepsilon(a)=0$, since $s \cdot (1-g)=0$. Therefore $\varepsilon (a)=0$. Apply this to $\psi(-)=\lambda(-,y)$.
\end{proof}

Let $\Lambda^2 \to \pi_{n}(N)$ be the canonical projection, with basis $v_1 \mapsto x$, $v_2 \mapsto y$. Then the induced quadratic form $(\lambda,
\mu)$ on $\Lambda^2$ has the form of (\ref{eqn:quad}). Now consider an isometric embedding $(\Lambda^2, \lambda, \mu)=S \hookrightarrow H_+(\Lambda)^2$ as a half rank direct summand. In view of Lemma \ref{lem:lag} we need to show that $S$ has a Lagrangian complement. Since $\mu(v_1)=0$, after an ambient isometry, we may assume that
$$v_1=e_1, \ \ v_2=a_1 \cdot e_1 + a_2 \cdot e_2 + (1-g)\cdot f_1 + b_2 \cdot f_2$$
As before, since $a_2 \cdot e_2 + (1-g)\cdot f_1 + b_2 \cdot f_2$ is a primitive element, we must have $a_2 \cdot \Lambda + b_2 \cdot \Lambda + I = \Lambda$. On the other hand the fact $\varepsilon(\lambda(v_2,v_2))=0$ implies $\varepsilon(a_2)\cdot \varepsilon (b_2)=0$. Without loss of generalizty, assume $\varepsilon (b_2)=0$, then $\varepsilon (a_2)=1$ and $a_2=1+a(1-g)$ for some $a \in \Lambda$. Let $w_1=a \cdot e_2 + f_1$, $w_2=-\bar a \cdot e_1 + f_2$. The matrix
$$\left ( \begin{array}{cccc}
1 & a_1 & 0 & -\bar a\\
0 & 1+a(1-g) & a & 0\\
0 & 1-g & 1 & 0 \\
0 & b_2 & 0 & 1\\
\end{array} \right )$$
has determinant $1$, therefore $w_1$ and $w_2$ generate a submodule $U$, which is a Lagrangian complement of $S$.

\subsection{Normal bordism class and the proof of Theorem \ref{thm:gen}}
\begin{prop}
There is a surjective homomorphism $\Omega_{2n}^{O\langle n+1\rangle} \to \Omega_{2n}(B,p) $.
\end{prop}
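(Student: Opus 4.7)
The plan is to build the map by sending a closed $O\langle n+1\rangle$-manifold $N$ to the class $[(N, c_N, \nu N)] \in \Omega_{2n}(B,p)$, where $c_N \colon N \to K(\z/m,1)$ is the constant map at a basepoint; because $c_N^{*}\gamma$ is trivial, the given lift of $\nu N$ through $BO\langle n+1\rangle$ is automatically a $B$-structure covering $c_N^{*}\gamma \oplus \nu N = \nu N$. The construction respects bordisms, so it descends to a well-defined homomorphism.

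For surjectivity I would apply the Atiyah-Hirzebruch spectral sequence for $\Omega_*(B,p)$ associated to the projection $B \to K(\z/m,1)$ onto the first factor, whose fiber is $BO\langle n+1\rangle$. Its $E^{2}$-page has the form
$$E^{2}_{p,q} = H_p(K(\z/m,1);\,\Omega_q^{O\langle n+1\rangle}),$$
possibly with a local coefficient twist coming from $\gamma$, which will not affect the subsequent vanishing. The classes in the image of $\Omega_{2n}^{O\langle n+1\rangle}$ sit in filtration $0$, so it suffices to show $E^{2}_{p, 2n-p} = 0$ for every $p \ge 1$; the edge homomorphism $\Omega_{2n}^{O\langle n+1\rangle} = E^{2}_{0, 2n} \twoheadrightarrow E^{\infty}_{0, 2n} = \Omega_{2n}(B,p)$ is then surjective.

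When $p = 2n$ the coefficient group is $\Omega_0^{O\langle n+1\rangle} = \z$, and $H_{2n}(K(\z/m,1);\z) = 0$ because $2n>0$ is even. When $1 \le p \le 2n-1$ one has $q = 2n - p \ge 1$, and the vanishing reduces to the claim that $\Omega_q^{O\langle n+1\rangle}$ is a finite abelian group whose order is coprime to $m$. For $q \le n$, the $n$-connectivity of $BO\langle n+1\rangle$ makes the unit of $MO\langle n+1\rangle$ identify $\Omega_q^{O\langle n+1\rangle}$ with the stable stem $\pi_q^S$, whose prime factors are bounded by $C(n)$ by definition. For $n+1 \le q \le 2n-1$, an Atiyah-Hirzebruch computation for $\pi_*(MO\langle n+1\rangle)$ expresses $\Omega_q^{O\langle n+1\rangle}$ as iterated extensions of contributions from $\mathrm{Coker}\,J_q$ and from $\pi_{q-*}^{S}$ tensored with the low-dimensional homology of $BO\langle n+1\rangle$ (whose torsion is purely $2$-primary); the bound $C(n)$ is calibrated so that each of these contributions has prime factors at most $C(n)$. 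Since the primes of $m$ all exceed $C(n)$, multiplication by $m$ is invertible on $\Omega_q^{O\langle n+1\rangle}$, and because $\widetilde H_{*}(K(\z/m,1);\z)$ is $m$-torsion in every positive degree, the universal coefficient theorem yields $H_p(K(\z/m,1); \Omega_q^{O\langle n+1\rangle}) = 0$.

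The main technical obstacle, and the point where the precise value of $C(n)$ enters, is the prime bound on $\Omega_q^{O\langle n+1\rangle}$ for $n+1 \le q \le 2n-1$; this is the computation that the Appendix is designed to supply, and once it is in hand the spectral sequence argument above completes the proof.
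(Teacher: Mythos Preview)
Your overall strategy matches the paper's: identify $\Omega_{2n}(B,p)$ with the twisted bordism $\Omega_{2n}^{O\langle n+1\rangle}(K(\z/m,1);\gamma)$, run the Atiyah--Hirzebruch spectral sequence with $E^2_{p,q}=H_p(K(\z/m,1);\Omega_q^{O\langle n+1\rangle})$, and show that every term on the line $p+q=2n$ with $p\ge 1$ vanishes under the standing hypothesis on $m$.

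There is, however, a genuine gap. You assert that for $1\le q\le 2n-1$ the group $\Omega_q^{O\langle n+1\rangle}$ is a \emph{finite} abelian group of order coprime to $m$. This is false: for $q\equiv 0\pmod 4$ with $n+1\le q\le 2n-1$ (which occurs as soon as $n\ge 5$) the group contains a $\z$-summand, detected for instance by the signature of an almost-parallelizable $(q/2-1)$-connected $q$-manifold. The paper is careful on exactly this point: in item~(3) of its list it states only $\mathrm{Coker}\,J_q\cong \mathrm{Tors}\,\Omega_q^{O\langle n+1\rangle}$ for such $q$. The $E^2$-term still vanishes, but for a different reason you do not supply: when $q\equiv 0\pmod 4$ the complementary index $p=2n-q$ is even, and for even $p>0$ one has $H_p(K(\z/m,1);A)={}_mA$ by universal coefficients, which is zero since $\z$ has no $m$-torsion and the torsion part of $\Omega_q^{O\langle n+1\rangle}$ has order coprime to $m$. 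Your argument needs this parity observation to go through.

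Two smaller points. First, the paper determines $\Omega_q^{O\langle n+1\rangle}$ for $n+1\le q\le 2n-1$ by \emph{surgery} in the Kervaire--Milnor style (every class is represented by a highly connected manifold), not by a secondary Atiyah--Hirzebruch computation for $\pi_*(MO\langle n+1\rangle)$. This is cleaner and directly produces $\mathrm{Coker}\,J_q$; your sketch would still have to explain why the image-of-$J$ part of $\pi_q^S$ sitting at $E^2_{0,q}$ is killed by differentials, and note that $C(n)$ only bounds primes in $\mathrm{Coker}\,J_q$, not in $\pi_q^S$, for $q>n$. Second, the Appendix of this paper handles the $n=3$ bordism computation specifically, not the general prime bound you defer to it for.
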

\begin{proof}
The bordism group $\Omega_{2n}(B,p)$ is identified with the twisted bordism group $\Omega_{2n}^{O\langle n +1 \rangle }(K(\z/m,1); \gamma)$.
 The $E_2$-terms in the Atiyah-Hirzebruch spectral sequence are
$$E_2^{p,q}=H_p(K(\z/m,1);\Omega_q^{O\langle n+1 \rangle})$$
The bordism groups $\Omega_q^{O\langle n+1 \rangle}$ are as follows:
\begin{enumerate}
\item When $q \le n-1$, there is an isomorphism $\pi_q^S \cong \Omega_q^{\mathrm{fr}} \stackrel{\cong}{\rightarrow} \Omega_q^{O\langle n+1 \rangle}$.
\item When $q=n$, there is a surjective homomorphism  $\pi_q^S \cong \Omega_q^{\mathrm{fr}} \to \Omega_q^{O\langle n+1 \rangle}$.
\item When $n+1 \le q \le 2n-1$, by surgery, for $q$ odd, there is an isomorphism $\mathrm{Coker}\,J_q \stackrel{\cong}{\rightarrow} \Omega_q^{O\langle n+1 \rangle}$;  for $q \equiv 2 \pmod 4$, there is an injective homomorphism  $\mathrm{Coker}\,J_q \to \Omega_q^{O\langle n+1 \rangle}$ whose image has index at most $2$; for $q \equiv 0 \pmod 4$, there is an isomorphism $\mathrm{Coker}\,J_q \stackrel{\cong}{\rightarrow} \mathrm{Tors}\,\Omega_q^{O\langle n+1 \rangle}$, where $J_q \colon \pi_q(O) \to \pi_q^S$ is the stable $J$-homomorphism, and $\mathrm{Tors}\,\Omega_q^{O\langle n+1 \rangle}$ denotes the torsion subgroup of $\Omega_q^{O\langle n+1 \rangle}$.\end{enumerate}
Therefore if the prime factors of $m$ are all larger than $C(n)$,  all terms in the line $p+q=2n$ are zero, except for the $E_{2}^{0,2n}$-term, which can be identified with $\Omega_{2n}^{O\langle n+1\rangle}$.
\end{proof}

\begin{proof}[Proof of Theorem \ref{thm:gen}]
By surgery, elements in $\Omega_{2n}^{O\langle n+1\rangle}$ are represented by $(n-1)$-connected $2n$-manifolds with an $O\langle n+1\rangle$-structure. Furthermore there are homomorphisms
$$\mathrm{sign} \colon \Omega_{2n}^{O\langle n+1\rangle} \to  \z \ \ \ (\textrm{$n$ even}), \ \ \ \mathrm{Arf} \colon  \Omega_{2n}^{O\langle n+1\rangle} \to \z /2 \ \ \ (\textrm{$n$ odd})$$
such that  elements in the kernels are represented by homotopy spheres $\Sigma \in \Theta_{2n}$.

Therefore if the quotient manifold $M$ has the same normal $(n-1)$-type as certain model manifold $M_{0}$, their difference in $\Omega_{2n}(B,p)$ is represented by an $(n-1)$-connected $2n$-manifold $Y$ with an $O\langle n+1 \rangle$-structure. When $n$ is even, the signature of $Y$ is $0$, therefore there exists a $\Sigma \in \Theta_{2n}$ such that $M \sharp \Sigma$ is $B$-bordant to $M_{0}$. By \S \ref{subsec:obeven} the surgery obstruction is elementary, therefore $M \sharp \Sigma$ is diffeomorphic to $M_0$. The same applies to the case $n$ odd and there is no Kervaire invariant $1$ closed manifold (i.~e.~$n \ne 15$, $31$ and possiblly $63$). In the remaining dimensions, if the Kervaire invariant of $Y$ is $1$, then by surgery there is a smooth Kervaire manifold $K$ (i.~e.~an $(n-1)$-connected almost parallelizable $2n$-manifold with Kervaire invariant $1$ and $H_n(K) \cong \z^2$), such that $M_{0}$ is $B$-bordant to $M \sharp K$. Then $M_{0} \sharp (S^{n} \times S^{n})$ and $M \sharp K$ are $B$-bordant and have equal Euler characteristic, thus $M_{0} \sharp (S^{n} \times S^{n})$ is diffeomorphic to $M \sharp K$. Passing to the universal cover,  $\sharp_{g+m}(S^{n} \times S^{n})$ is diffeomorphic to $\sharp_{g}(S^{n} \times S^{n}) \sharp m \cdot K$. But when $m$ is odd, these two manifolds have different Kervaire invariants, a contradiction.  Now we have shown that $M\sharp \Sigma$  is diffeomorphic to $M_{0}$ for some $\Sigma$. Passing to the universal cover, $\sharp {g}(S^{n} \times S^{n})$ is diffeomorphic to $\sharp {g}(S^{n} \times S^{n}) \sharp m \cdot \Sigma$. Since $\sharp g(S^{n} \times S^{n})$ is the boundary of a parallelizable manifold, its inertia group is trivial, therefore $m  \cdot \Sigma = 0 \in \Theta_{2n}$. When the prime factors of $m$ are larger than $C(n)$, $m$ is invertible in $\Theta_{2n}$, hence $\Sigma$ is the standard sphere, and $M$ is diffeomorphic to $M_{0}$.
\end{proof}

\section{Appendix: computation of $\Omega_{6}(B,p)$}
\begin{prop}
The normal bordism group $\Omega_{6}(B,p)=0$.
\end{prop}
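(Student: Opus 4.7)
My plan is to compute $\Omega_6(B,p)$ via the Atiyah--Hirzebruch spectral sequence in (possibly twisted) spin bordism. As noted in the text, $\Omega_*(B,p)$ is identified with $\omsp_*(K(\z/m,1))$ in the spin case and with $\omsp_*(K(\z/m,1);\gamma)$ in the non-spin case; the $E^2$-page is $E^2_{p,q}=H_p(K(\z/m,1);\omsp_q)$, with $\gamma$-twisted coefficients in the second case, converging to $\Omega_{p+q}(B,p)$.

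My first step is to enumerate the entries on the diagonal $p+q=6$. Using $\omsp_q=\z,\z/2,\z/2,0,\z,0,0$ for $q=0,\ldots,6$, the integral homology of $K(\z/m,1)$ (which vanishes in positive even degrees and equals $\z/m$ in odd degrees), and its mod-$2$ homology ($\z/2$ in every degree when $m$ is even, trivial in positive degrees when $m$ is odd), every entry on the diagonal vanishes except possibly $E^2_{4,2}\cong\z/2$ and $E^2_{5,1}\cong\z/2$, and these occur only when $m$ is even. Hence the proposition is immediate for $m$ odd; equivalently, $K(\z/m,1)$ is $2$-locally trivial and $\Omega_6^{\mathrm{SO}}=0$ handles the odd-prime piece.

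For $m$ even I would localize at $2$, reduce to $K(\z/2^\alpha,1)$ with $\alpha=\nu_2(m)$, and kill the two remaining $\z/2$'s by differentials. The first Postnikov $k$-invariant of $\mathrm{MSpin}$ is $\mathrm{Sq}^2\circ r$ (with $r$ the mod-$2$ reduction), so the incoming $d_2$'s at $(4,2)$ and $(5,1)$ are dual to $\mathrm{Sq}^2\colon H^{4}(K(\z/2^\alpha,1);\z/2)\to H^{6}(K(\z/2^\alpha,1);\z/2)$ and to $\mathrm{Sq}^2\circ r\colon H^{5}(K(\z/2^\alpha,1);\z)\to H^{7}(K(\z/2^\alpha,1);\z/2)$ respectively. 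If either $d_2$ fails to be surjective, the surviving $\z/2$ is dispatched by the $d_3$-differential governed by the next $k$-invariant of $\mathrm{MSpin}$, landing in $E^3_{4,2}$ from $E^3_{7,0}=\z/m$ (and analogously for $(5,1)$). In the non-spin case the same argument applies after twisting $\mathrm{Sq}^2$ by $w_2(\gamma)$.

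The bulk of the work, and the main obstacle, is the Steenrod-algebra computation in the last step. On $H^*(K(\z/2,1);\z/2)=\z/2[x]$ the formula $\mathrm{Sq}^2(x^n)=\binom{n}{2}x^{n+2}$ vanishes for $n=4,5$, so the argument cannot finish at the $d_2$-level on the smallest $2$-primary piece alone; one must exploit that for $\alpha\ge 2$ the ring $H^*(K(\z/2^\alpha,1);\z/2)=\z/2[a]\otimes\Lambda(b)$ carries a non-trivial Steenrod action in these degrees, or else run the $d_3$-argument determined by the second $k$-invariant of $\mathrm{MSpin}$ and check that no extension problem can resurrect a non-zero class.
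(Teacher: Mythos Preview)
Your overall framework matches the paper's: both run the Atiyah--Hirzebruch spectral sequence for (twisted) spin bordism, both see that for $m$ odd the diagonal $p+q=6$ is zero, and both identify $E^2_{4,2}\cong E^2_{5,1}\cong\z/2$ as the only survivors when $m$ is even. The divergence, and the gap, is in \emph{which} differentials you use to kill these classes.

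You look exclusively at \emph{incoming} differentials. For $E^2_{5,1}$ this is unnecessary: the \emph{outgoing} $d_2\colon E^2_{5,1}\to E^2_{3,2}$ is dual to $\mathrm{Sq}^2\colon H^3(K(\z/m,1);\z/2)\to H^5(K(\z/m,1);\z/2)$, and this is an isomorphism for every even $m$ (on $\z/2[x]$ one has $\mathrm{Sq}^2 x^3=x^5$; on $\z/2[x,y]/(x^2)$ one has $\mathrm{Sq}^2(xy)=xy^2$). So $E^3_{5,1}=0$ already, and your worry about $\mathrm{Sq}^2 x^5=0$ is aimed at the wrong map.

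The more serious issue is your plan for $E^3_{4,2}$. You propose to kill it by $d_3\colon E^3_{7,0}\to E^3_{4,2}$, but this differential is \emph{zero}: the $7$-dimensional lens space $L^7_m$ (in particular $\rp^7$) is spin, so the edge homomorphism $\omsp_7(K(\z/m,1))\to H_7(K(\z/m,1))$ is surjective and $E^2_{7,0}$ is a permanent cycle. The paper instead uses the \emph{outgoing} $d_3\colon E^3_{4,2}\to E^3_{1,4}\cong\z/m$. To see that this is nonzero it does not try to identify the secondary operation directly; rather it imports the known computation $\omsp_6(\rp^\infty)\cong\Omega_5^{\mathrm{Pin}^-}=0$ to force $d_3$ to be an isomorphism when $m=2$, and then pushes this forward to general even $m$ via naturality along $\z/2\hookrightarrow\z/m$. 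Your proposal lacks both this external input and the correct target for $d_3$, so as written it cannot close. In the non-spin case the paper's argument is actually simpler: the twist by $w_2(\gamma)$ makes $\mathrm{Sq}^2+w_2(\gamma)\cup-$ an isomorphism in the relevant degrees, so both classes already die at $d_2$.
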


\begin{proof}
We first prove the spin case, where $\Omega_{6}(B,p)=\omsp_6(K(\z/m,1))$. The $E^2$-terms in the Atiyah-Hirzebruch spectral sequence are $E^2_{p,q}=H_p(K(\z/m,1);\omsp_q)$.

When $m$ is odd, it is easy to see that all $E^2$-terms in the line $p+q=6$ are $0$. Therefore $\omsp_6(K(\z/m,1))=0$.

When $m$ is even, the situation is more complicated. We first look at the case $m=2$. It's known that $\omsp_6(\rp^{\infty}) \cong \Omega_5^{\mathrm{Pin^-}}=0$ \cite[Theorem 1 and Corollary 2]{KirTay}.  Nevertheless we examine the Atiyah-Hirzebruch spectral sequence to get information about the differentials, which will be used in the analysis of the spectral sequence for general even $m$. Now the non-zero $E^2$-terms in the line $p+q=6$ are $E^2_{4,2}$ and $E^2_{5,1}$, both isomorphic to $\z/2$. It's known that the differentials $d_2 \colon E^2_{p,1} \to E^2_{p-2, 2}$ are dual to the Steenrod squares $\mathrm{Sq}^2$ (c.~f.~\cite{Teichner}). For $K(\z/2,1)=\rp^{\infty}$ we have $\mathrm{Sq}^2 \colon H^3(\rp^{\infty};\z/2) \to H^5(\rp^{\infty};\z/2)$ is an isomorphism and $\mathrm{Sq}^2 \colon H^4(\rp^{\infty};\z/2) \to H^6(\rp^{\infty};\z/2)$ is trivial. Therefore $E^3_{4,2} \cong \z/2$ is the only non-trivial term in the line $p+q=6$ on the $E^3$-page. Further possibly non-trivial differentials ending at or starting from the position $(4,2)$ are $d_3: E^3_{7,0} \to E^3_{4,2}$ and $d_3: E^3_{4,2} \to E^3_{1,4}$, and we know one of them must be non-trivial since $\omsp_{6}(\rp^{\infty})=0$. Since the edge homomorphism $\omsp_7(\rp^{\infty}) \to H_7(\rp^{\infty})$ is surjective (notice that $\rp^7$ is spin), $E^3_{7,0}$ must survive to infinity, and thus $d_3: E^3_{7,0} \to E^3_{4,2}$ is trivial. Therefore we see that $d_3: E^3_{4,2} \to E^3_{1,4}$ is an isomorphism.

When $m>2$ is even, the nontrivial $E^2$-terms in the line $p+q=6$ are still $E^2_{4,2}$ and $E^2_{5,1}$, both isomorphic to $\z/2$. The mod $2$ cohomology ring and Steenrod operations of $K(\z/m,1)$ are known as follows \cite{Serre53}:
$$H^*(K(\z/m,1);\z/2) \cong \left\{ \begin{array}{ll}
\z/2[x], \  |x|=1 & \textrm{when $m \equiv 2 \pmod 4$} \\
& \\
\z/2[x,y]/(x^2), \ |x|=1, \ |y|=2 & \textrm{when $m \equiv 0 \pmod 4$}
\end{array} \right.
$$
with $\mathrm{Sq}^1y=0$.
From this it's easy to see that the Steenrod square
$$\mathrm{Sq}^2 \colon H^3(K(\z/m,1);\z/2) \to H^5(K(\z/m,1);\z/2)$$
 is an isomorphism. Hence $E^3_{4,2} \cong \z/2$ is the only non-trivial term in the line $p+q=6$ on the $E^3$-page. To show that $d_3 \colon E^3_{4,2} \to E^3_{1,4} \cong \z/m$ is non-trivial, consider the map $\rp^{\infty} \to K(\z/m, 1)$ induced by the non-trivial homomorphism $\z/2 \to \z/m$. From the induced map between the spectral sequences we obtain a commutative diagram
$$\xymatrix{
E^3_{4,2}(\rp^{\infty}) \ar[r]^{\cong \ \ \ \ } \ar[d]^{d_3'} & E^3_{4,2}(K(\z/m,1)) \ar[d]^{d_3} \\
E^3_{1,4}(\rp^{\infty}) \ar[r] & E^3_{1,4}(K(\z/m,1))}
$$
in which the upper horizontal map
$$E^3_{4,2}(\rp^{\infty})=H_4(\rp^{\infty};\z/2) \to H_4(K(\z/m,1);\z/2)=E^3_{4,2}(K(\z/m,1))$$
is an isomorphism,
the lower horizontal map
$$E^3_{1,4}(\rp^{\infty})=H_1(\rp^{\infty}) \to H_1(K(\z/m,1))=E^3_{1,4}(K(\z/m,1))$$
is non-trivial, and $d_3'$ is an isomorphism. Therefore $d_3$ is  nontrivial. This shows that $\omsp_6(K(\z/m,1))=0$.

In the non-spin case  $\Omega_6(B,p)=\omsp_6(K(\z/m,1);\gamma)$. By the Thom isomorphism the $E^2$-terms still can be identified with $H_p(K(\z/m ,1);\omsp_q)$, whereas the differential $d_2 \colon E^2_{p,1} \to E^2_{p-2, 2}$ is dual to
$$\mathrm{Sq}^2+w_2(\gamma)\cup - \colon H^{p-2}(K(\z/m,1);\z/2) \to H^{p}(K(\z/m,1);\z/2)$$
and $d_2 \colon E^2_{p,0} \to E^2_{p-2, 1}$ is a composition
$$H_p(K(\z/m,1)) \to H_p(K(\z/m,1);\z/2) \to H_{p-2}(K(\z/m,1);\z/2)$$
in which the second map is dual to
$$\mathrm{Sq}^2+w_2(\gamma)\cup - \colon H^{p-2}(K(\z/m,1);\z/2) \to H^{p}(K(\z/m,1);\z/2).$$
From the structure of the cohomology ring $H^*(K(\z/m,1);\z/2)$ and its Steenrod operation, it's easy to see
$$\mathrm{Sq}^2+w_2(\gamma)\cup - \colon H^{p-2}(K(\z/m,1);\z/2) \to H^{p}(K(\z/m,1);\z/2)$$
is an isomorphism for $p=6$, $7$.
Therefore the non-trivial $E^2$-terms in the line $p+q=6$, $E^2_{4,2}$ and $E^2_{5,1}$, are killed by $d_2$. This shows that $\omsp_6(K(\z/m,1);\gamma)=0$.
\end{proof}

\noindent \textbf{Acknowledgement.} We would like to thank Matthias Kreck for clarifying the relation between the $L$- and tilde $L$-groups in Lemma \ref{lem:kr}. We would like to thank Ian Hambleton and Diarmuid Crowley for helpful communications.

\bibliographystyle{amsplain}

\begin{thebibliography}{99}
\bibitem{Adams66}, F.~Adams, \emph{On the group $J(X)$ IV}, Topology 5 (1), 21-71.

\bibitem{Bak} A.~Bak, \emph{$K$-Theory of Forms}, Annals of Mathematics Studies, 98. Princeton University Press, Princeton, N.J.; University of Tokyo Press, Tokyo, 1981.

\bibitem{Cr07} D.~Crowley, \emph{On the mapping class groups of $\sharp_r(S^p \times S^p)$ for $p=3,7$}, Math.~Z.~269 (2011), no.~3-4, 1189-1199.

\bibitem{CrHa} D.~Crowley, I.~Hambleton, \emph{Finite group actions on Kervaire manifolds}, Adv.~Math.~283 (2015), 88-129.

\bibitem{DP} Y.~Ding and J.~Pan, \emph{Computing degree of maps between manifolds}, Acta Math.~Sin.~(Engl.~Ser.) 21 (2005), no.~6, 1277-1284.

\bibitem{DW} H.~Duan and S.~Wang, \emph{The degrees of maps between manifolds}, Math.~Z.~244 (2003), no.~1, 67-89.


\bibitem{GaRa} S.~Galatius and O.~Randal-Williams, \emph{Homological stability for moduli spaces of high dimensional manifolds. I}, J.~Amer.~Math.~Soc.~31 (2018), no.~1, 215-264.

\bibitem{HK93} I.~Hambleton, M.~Kreck, \emph{Cancellation, elliptic surfaces and the topology of certain four-manifolds}, J.~reine angew.~Math.~444(1993),  79-100.

\bibitem{HK16} I.~Hambleton and M.~Kreck, \emph{Recognizing products of surfaces and simply connected 4-manifolds}, Proc.~Amer.~Math.~Soc.~143 (2015), no.~5, 2253-2262.

\bibitem {HKT94} I. Hambleton, M. Kreck, P. Teichner, \emph{Nonorientable $4$-manifolds with fundamental group of order $2$}, Trans.~Amer.~Math.~Soc.~344 (1994), no.~2, 649-665.

\bibitem{HT99}I. Hambleton, L.~R.~Taylor, \emph{A guide to the calculation of the surgery group}, Surveys on Surgery Theory, Vol.~1, 225-274, Ann.~of Math.~Stud., 145, Princeton Univ.~Press, Princeton, NJ, 2000.

\bibitem{Kervaire} M.~Kervaire, \emph{On the Pontryagin classes of certain $SO(n)$-bundles over manifolds}, Amer.~J.~Math.~80 (1958), 632-638.

\bibitem{Kerv59} M.~Kervaire, \emph{Some nonstable homotopy groups of Lie groups}, Illinois J.~Math., 4 (1960), 161-169. 

\bibitem{KerMil} M.~Kervaire, J.~Milnor, \emph{Groups of homotopy spheres. I}, Ann.~of Math.~(2) 77 1963 504-537.

\bibitem{KirTay} R.~Kirby, L.~Taylor, \emph{A calculation of $Pin^+$ bordism groups}, Comment.~Math.~Helvetici 65 (1990), 434-447.

\bibitem{Kra19} M.~Krannich, \emph{Mapping class groups of highly-connected $4k+2$-manifolds}, arXiv:1902.11097

\bibitem {Kreck99} M.~Kreck, \emph{Surgery and duality},  Ann.~of Math.~(2) 149 (1999), no.~3, 707-754.

\bibitem {Kreck78} M.~Kreck, \emph{Isotopy classes of diffeomorphisms of $(k-1)$-connected almost-parallelizable $2k$-manifolds}, Algebraic topology, Aarhus 1978 (Proc.~Sympos., Univ.~Aarhus, Aarhus, 1978), pp.~643-663, Lecture Notes in Math., 763, Springer, Berlin, 1979.

\bibitem{Kry02} N.~Krylov, \emph{On the Jacobi group and the mapping class group of $S^3\times S^3$}, Trans.~Amer.~Math.~Soc.~355 (2003), no.~1, 99-117.

\bibitem{Ma61} B.~Mazur, \emph{Stable equivalence of differentiable manifolds}, Bull.~Amer.~Math.~Soc.~67 (1961), 377-384.

\bibitem{Mil66} J.~Milnor, \emph{Whitehead torsion}, Bull.~Amer.~Math.~Soc.~72 (1966), 358-426.


\bibitem{Mil99} J.~Milnor \emph{Classification of $(n-1)$-connected $2n$-manifolds and the discovery of exotic spheres}, Surveys on Surgery Theory, Vol.~1, 25-30, Ann.~of Math.~Stud., 145, Princeton Univ.~Press, Princeton, NJ, 2000.

\bibitem {Mil74} J.~Milnor and Stasheff, \emph{Characteristic Classes}, Annals of Mathematics Studies, No.~76.~Princeton University Press, Princeton, N.~J.; University of Tokyo Press, Tokyo, 1974

\bibitem{Rav} D.~Ravenel, \emph{Complex Cobordism and Stable Homotopy Groups of Spheres}, Pure and Applied Mathematics, 121. Academic Press, Inc., Orlando, FL, 1986.

\bibitem{Reiner} I.~Reiner, \emph{Integral representations of cyclic groups of prime order}, Proc.~Amer. Math.~Soc.~Vol.~8 No.~1 (1957), 142-146.

\bibitem {Teichner} P.~Teichner, \emph{On the signature of four-manifolds with universal covering spin}, Math.~Ann., 295 no.~4, 745-759 (1993)

\bibitem{Serre53} J.~P.~Serre, \emph{Cohomologie modulo $2$ des complexes d'Eilenberg-MacLane}, Comment.~Math.~Helv.~27, (1953), 198-232.

\bibitem {Wall} C.~T.~C.~Wall, \emph{Classification of $(n-1)$-connected $2n$-manifolds}, Ann.~of Math.~(2) 75 1962 163-189.

\bibitem{Wall76} C.~T.~C.~Wall, \emph{Classification of hermitian forms. VI. Groups rings}, Ann.~of Math.~(2) 103 (1976) 1-80.


\bibitem{Yang} H.~Yang \emph{Almost complex structures on $(n-1)$-connected $2n$-manifolds}, Topology Appl.~159 (2012), no.~5, 1361-1368.

\end{thebibliography}

\end{document}